\newcommand{%
	
	\import{./figures/}{.pdf_tex}
}[2][0.8]{%
	
	\import{./figures/}{#2.pdf_tex}
}
\newtheorem{problem}{Problem}[section]
\newtheorem{question}{Question}[section]
\newtheorem{theorem}[problem]{Theorem}
\newtheorem{proposition}[problem]{Proposition}
\newtheorem{remark}[problem]{Remark}
\theoremstyle{definition} 
\newtheorem{definition}[problem]{Definition}
\definecolor{myblack}{RGB}{40,44,52}
\definecolor{mywhite}{RGB}{171,178,191}
\begin{document}
\title{Uniformization of surfaces with boundary and the application to the triple junction surfaces with negative Euler characteristic}
\author{Gaoming Wang}
\date{}
\maketitle

\begin{abstract}
	The conformal structure on minimal surfaces plays a key role in studying the properties of minimal surfaces. Here we extend the results of uniformization of surfaces with boundary to get the (weak) uniformization results for triple junction surfaces.
\end{abstract}

\section{Introduction}%
\label{sec:introduction}

The classical uniformization theorem says that for every conformal structure on Riemann surface $\Sigma$ without boundary, we can find a metric of constant Gaussian curvature $-1,0$ or $1$ depending on the genus of $\Sigma$. There is a lot of work related to the uniformization of Riemann surfaces, including the prescribed Gaussian curvature problem \cite{berger1971riemannian,kazdan1974curvature, borer2015large, chen2003gaussian} and the complete surface case \cite{mazzeo2002curvature}. 

After the work of \cite{wang2021curvature}, we wonder could we do the uniformization for triple junction surfaces? 
It turns out we do have some partial results regarding the uniformization of triple junction surfaces. 

Suppose we have a triple junction surface $M=(\Sigma_1,\Sigma_2,\Sigma_3;\Gamma)$ with a compatible metric $g=(g_1,g_2,g_3)$ on it (see Section \ref{sec:preliminary} for precise definitions of these notations), each $\Sigma_i$ is orientable, we can prove the following (weak) uniformization for $M$. 
\begin{theorem}
	
	If $\chi(M)\le 0$ and $\Gamma$ has only one component, then we can find a new metric $\overline{g}=(\overline{g}_1, \overline{g}_2,\overline{g}_3)$ that each $\overline{g}_i$ is a hyperbolic metric on $\Sigma_i$ such that the following holds,
	\begin{itemize}
		\item $\overline{g}_i$ has constant geodesic curvature on boundary $\kappa_i$.
		\item The boundary length $L_i$ of $\partial \Sigma_i$ under metric $\overline{g}_i$ satisfies $L_i=L_j$ for $i\neq j$ (same boundary length).
		\item The sum of geodesic curvature on $\Gamma$ is 0. That is $\sum_{i =1}^{3}\kappa_i=0$.
	\end{itemize}
	\label{thm:uniform_triple}
\end{theorem}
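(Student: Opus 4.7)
The plan is to uniformize each $\Sigma_i$ individually in its conformal class $[g_i]$ using the classical uniformization for compact Riemann surfaces with boundary, and then to couple the three pieces through the single junction circle $\Gamma$. Because $\Gamma$ is connected, the matching conditions in the theorem amount to three scalar constraints (two boundary-length equalities plus the curvature-sum); by normalizing to a common boundary length $L > 0$ from the outset, the problem reduces to the single scalar equation $\kappa_1(L) + \kappa_2(L) + \kappa_3(L) = 0$.

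For each fixed $i$ and each $L > 0$, one should produce a hyperbolic metric $\bar g_i(L) \in [g_i]$ with constant geodesic curvature $\kappa_i(L)$ on $\partial\Sigma_i$ and prescribed boundary length $L$ on $\Gamma$ by solving the semilinear elliptic problem
\[
\Delta_{g_i} u = K_{g_i} + e^{2u}\ \text{in }\Sigma_i,\qquad \partial_\nu u + \kappa_{g_i} = \kappa\, e^{u}\ \text{on }\partial\Sigma_i,
\]
with the normalization $\int_\Gamma e^{u}\, ds_{g_i} = L$ determining the constant $\kappa = \kappa_i(L)$. The Gauss--Bonnet identity $\kappa_i(L)\, L = 2\pi\chi(\Sigma_i) + A_i(L)$ (with $A_i(L)$ the hyperbolic area of $(\Sigma_i, \bar g_i(L))$), summed over $i$ and combined with $\chi(M) = \sum_i \chi(\Sigma_i)$, recasts the matching equation as
\[
A(L) := \sum_{i=1}^3 A_i(L) \;=\; -2\pi\chi(M) \;\ge\; 0,
\]
which is non-negative thanks to the hypothesis $\chi(M) \le 0$.

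The existence of a suitable $L_0$ would then follow from the intermediate value theorem applied to $A(L)$. Continuity of $A$ in $L$ comes from elliptic regularity and the implicit function theorem for the PDE above; $A(L) \to \infty$ as $L \to \infty$, since each piece develops an infinite funnel with $\kappa_i(L) \to 1^-$ (consistent with $\sum_i \kappa_i(L) \to 3 > 0$ in this limit). The main obstacle is the opposite regime $L \to 0^+$: one must show that $\liminf_{L \to 0^+} A(L) \le -2\pi\chi(M)$, so that $A(L)$ drops below the target before blowing up. Heuristically each $\bar g_i(L)$ degenerates in a controlled way as $L \to 0^+$ (disk pieces collapse with $A_i(L) \to 0$; higher-genus pieces pinch so that $\kappa_i(L) \to -\infty$ with $\kappa_i(L)L \to 2\pi\chi(\Sigma_i)$ and $A_i(L) \to 0$), and the hypothesis $\chi(M) \le 0$ is exactly what is needed to guarantee enough room for $A(L)$ to meet the target. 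Making this rigorous is the bulk of the work: it requires concentration-compactness for the PDE solutions $u_L$, Moser-type a priori estimates, and collar-lemma geometry near $\Gamma$ to control the degeneration. The one-component assumption on $\Gamma$ is precisely what reduces the coupling to a single scalar parameter $L$; with multiple junction components, the scheme would need to be replaced by a multi-parameter degree-theoretic argument.
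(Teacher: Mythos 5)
Your reduction is, after a Gauss--Bonnet rewrite, the same as the paper's: the paper parametrizes by the common boundary length $l$, defines $\hat c(l) = l\sum_i \kappa_i(l)$, and finds the unique zero $l_0$ of this strictly increasing function; since $\hat c(l) = 2\pi\chi(M) + \sum_i A_i(l)$, your equation $A(L) = -2\pi\chi(M)$ is literally the same scalar equation, and your use of the single component of $\Gamma$ to collapse the matching conditions to one parameter, your limits at $L\to 0^+$ and $L\to\infty$, and your separate treatment of disk pieces all coincide with the paper's argument.

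The genuine gap is the one you flag yourself: the entire scheme rests on having, for each $\Sigma_i$ with $\chi(\Sigma_i)<0$ and each $L\in(0,\infty)$, a hyperbolic metric in $[g_i]$ with constant boundary geodesic curvature and boundary length exactly $L$, depending continuously and monotonically on $L$, together with the degenerate limit $A_i(L)\to 0$ (equivalently $\kappa_i(L)L\to 2\pi\chi(\Sigma_i)$) as $L\to 0^+$. This is not supplied by ``classical uniformization'': prescribing the boundary length rather than the curvature presupposes that $c\mapsto L(-1,c)$ is a strictly increasing bijection of $(-\infty,1)$ onto $(0,\infty)$, and proving that, plus the limit behavior, is the content of Section 3 of the paper. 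The tools you gesture at (concentration-compactness, Moser estimates, collar geometry) are also not the ones needed: for $c\le 0$ everything follows from the maximum principle, and the limit $L\to 0^+$ is computed by a scaling duality --- $u_c+\log(-c)$ solves the problem with boundary curvature $-1$ and interior curvature $k=-1/c^2$, so the singular limit $c\to-\infty$ at $k=-1$ becomes the regular limit $k\to 0^-$ at $c=-1$, where the metrics converge to the flat one with $L(0,-1)=-2\pi\chi(\Sigma_i)$, giving $\hat L\to 2\pi\chi(\Sigma_i)$ and $A_i\to 0$ at once. Note also that the regime $0\le c<1$, which you need for $A(L)\to\infty$ as $L\to\infty$ and for configurations where the curvatures must be balanced by a positive one, is not reachable by the maximum principle and is imported from Rupflin's work; your sketch does not isolate this case. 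So the skeleton is correct, but the load-bearing analytic input is asserted rather than proved.
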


The precise statement of this theorem is given by Theorem \ref{thm_weak_uniformization_for_chi_m_le_0_}. 

The reason we care about the uniformization of triple junction surfaces is, we want to learn more about the conformal structure of triple junction surfaces, especially about the minimal triple junction surfaces.
In the classical theory of minimal surfaces, the conformal structure on minimal surfaces always plays an important role in the study of minimal surfaces. 
For example, noting that the Gauss map of a minimal surface immersed in $\mathbb{R}^3 $ is indeed a conformal map, we can determine the conformal structure for a complete minimal surface in $\mathbb{R}^3 $ with finite total curvature, see for example \cite{OssermanRobert1986Asom}. We note that the uniqueness of topological minimal sphere in $\mathbb{S}^3$ relies on the conformal structure on $\mathbb{S}^2$. 
Here is a survey \cite{meeks2004conformal} of some conformal properties about minimal surfaces.
So it is worth learning about the conformal structures on triple junction surfaces.

Besides, the extremal metric problem of the first eigenvalue is still related to the conformal structure. In \cite{hersch1970quatre} for $\mathbb{S}^2$ case and in \cite{li1982new} for $\mathbb{RP}^2$ case, they showed the extremal metrics on those surfaces are the constant curvature metrics. In particular, the eigenfunctions corresponding to the first eigenvalues of extremal metrics are usually related to the immersion of a minimal surface into the sphere, see for example \cite{li1982new, yang1980eigenvalues} for details. 
So the study of triple junction surfaces in the sphere is still required us to learn more about the conformal structures and extremal metrics on it. Uniformization is the first step to do that.

Based on the definition of weak uniformization (see Section \ref{sec:preliminary}), we can see the key in the uniformization for triple junction surfaces is to do the uniformization of surface with boundary at first. Actually, there are also some results regarding the uniformization of surface with boundary, see for examples \cite{osgood1988extremals, osgood1988compact,osgood1989moduli,khuri1991heights,kim2008surfaces} including the study of moduli space of surfaces with boundary.

In the work of Osgood, Philips and, Sarnak \cite{osgood1988extremals}, they've considered two ways (fixing the area or fixing the boundary length) of finding extremal metrics to get two kinds of uniformization, one is the hyperbolic metric with geodesic boundary curves and another one is the flat metric with constant geodesic curvature on the boundary. Moreover, Brendle \cite{brendle02curvatureflow,brendle02afamilyofcurvatureflow} has considered a family of curvature flow of metrics and shown that the metric will converge to a metric with constant interior Gaussian curvature $k$ and constant geodesic curvature $c$ on the boundary. Note that $k$ and $c$ have the same sign in the work of Brendle. 

The remarkable results for the uniformization of surfaces with boundary were due to Rupflin \cite{rupflin2021hyperbolic}.
In her work, she extended the uniformization to the case $c$ and $k$ could have different signs. Our second main theorem is like an extension of Rupflin's work of surface with only one boundary component. More precisely, we will prove the following theorem (see Theorem \ref{thm:uniform_surface} for more precise statement),
\begin{theorem}
	
	Let $(\Sigma,g)$ be a smooth compact oriented surface with boundary $\partial \Sigma$ and negative Euler characteristic. 
	Suppose $\partial \Sigma$ has only one component.
	Then for any $k\le 0, c<\sqrt{-k}$, there is a unique metric $g_{k,c}$ conformal to $g$ such that it has constant Gaussian curvature $k$ and constant geodesic curvature $c$ on $\partial \Sigma$. 
	
	Moreover, if we let $L(k,c)$ be the length of boundary under metric $g_{k,c}$, then $L$ is a continuous function defined on domain $\mathcal{D}= \{ (k,c)\in \mathbb{R}^2 : k\le 0, c<\sqrt{-k}\}$. In particular, we have the following monotonicity properties for $L$ when fixing $k$,
	\begin{itemize}
		\item $L(k, \cdot)$ is strictly increasing with $L(k,-\infty)
			=0,L(k,\sqrt{-k})=+\infty$.
		\item $\hat{L}(k,c):= c L(k,c)$ is strictly increasing with respect to the variable $c$ with $\hat{L}(k,\sqrt{-k})=
			+\infty$ for $k<0$ and $\hat{L}(k,-\infty)=2\pi \chi(\Sigma)$ for $k\le 0$.
	\end{itemize}
	\label{thm:surface_with_boundary_introduction}
\end{theorem}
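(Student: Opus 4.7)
Write $g_{k,c}=e^{2u}g$; the problem becomes the semilinear mixed boundary value problem
\[
-\Delta_g u + K_g = k e^{2u} \text{ in } \Sigma, \qquad \partial_\nu u + \kappa_g = c e^u \text{ on } \partial\Sigma,
\]
which is the Euler--Lagrange equation of
\[
F_{k,c}(u) = \tfrac12\int_\Sigma|\nabla u|^2\, dA_g + \int_\Sigma K_g u\, dA_g - \tfrac{k}{2}\int_\Sigma e^{2u}\, dA_g + \int_{\partial\Sigma}\kappa_g u\, ds_g - c\int_{\partial\Sigma} e^u\, ds_g.
\]
My plan, in the spirit of Rupflin, is to minimize $F_{k,c}$ over $H^1(\Sigma)$. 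The Hessian at any $u$ is
\[
d^2F_{k,c}(u)[\phi,\phi] = \int_\Sigma|\nabla\phi|^2 - 2k\int_\Sigma e^{2u}\phi^2 - c\int_{\partial\Sigma} e^u\phi^2,
\]
and a sharp trace-type Moser--Trudinger inequality will be used to dominate the last term by the other two precisely when $c^2<-k$; this yields both coercivity (hence existence of a minimizer by the direct method) and strict convexity (hence uniqueness).

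\textbf{Monotonicity of $L$ and $\hat L$.} Let $v:=\partial_c u$. Differentiating the PDE in $c$ with $k$ fixed gives the linear problem $-\Delta v - 2k e^{2u} v = 0$ on $\Sigma$, $\partial_\nu v - c e^u v = e^u$ on $\partial\Sigma$. Testing the interior equation against $v$ and using the boundary condition produces the identity
\[
\partial_c L(k,c)=\int_{\partial\Sigma} v e^u\, ds_g = \int_\Sigma|\nabla v|^2 - 2k\int_\Sigma e^{2u}v^2 - c\int_{\partial\Sigma} e^u v^2 = d^2F_{k,c}(u)[v,v],
\]
which is strictly positive because $v\not\equiv 0$ (otherwise the boundary condition fails) and the Hessian is positive definite. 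For $\hat L=cL$, the case $c\ge 0$ is immediate from $\partial_c\hat L = L + c\partial_c L$. When $c<0$, I would combine this with Gauss--Bonnet $kA+cL=2\pi\chi(\Sigma)$ and observe that $\partial_c A = 2\int_\Sigma e^{2u}v\, dA_g$ is non-negative, the sign following from a maximum principle argument that forces $v\ge 0$ whenever $c\le 0$.

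\textbf{Continuity and boundary limits.} Continuity of $L$ on $\mathcal D$ is a consequence of uniqueness together with standard elliptic stability under continuous perturbations of $(k,c)$. The four limits follow from Gauss--Bonnet $kA + cL = 2\pi\chi(\Sigma)$ combined with the monotonicity above. For $L(k,-\infty)=0$, a barrier comparison in a geodesic collar near $\partial\Sigma$, where very negative boundary curvature forces the conformal factor $e^u$ to decay, bounds $L$ from above by a quantity vanishing as $c\to-\infty$; feeding $L\to 0$ into Gauss--Bonnet then yields $\hat L(k,-\infty)=2\pi\chi(\Sigma)$ once one controls $A$ from above. The divergence $L\to+\infty$ as $c\to\sqrt{-k}^-$ is the hardest of the four: I would argue by contradiction, showing that a uniform bound on $L$ allows extraction of a limit solution at $c=\sqrt{-k}$, ruled out precisely because $c^2<-k$ is the sharp condition behind the trace inequality used in Step~1.

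\textbf{Main obstacle.} The genuine hard point is the sharp threshold $c=\sqrt{-k}$: establishing existence/uniqueness uniformly up to this value and simultaneously proving the blow-up $L\to\infty$ both hinge on a trace Moser--Trudinger inequality with the correct constant. Extending Rupflin's single-parameter analysis (essentially $k=-1$) to the full two-parameter family $\mathcal D$ and tracking the dependence on $k$ in every estimate is where the bulk of the real work will lie.
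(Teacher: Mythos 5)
Your overall architecture (variational existence, linearized equation $v=\partial_c u$, the identity $\partial_c L = d^2F_{k,c}(u)[v,v]$, maximum principle for the sign of $v$, Gauss--Bonnet for the limits) is essentially the route the paper takes, modulo one organizational difference: the paper first reduces the two-parameter family to the two one-parameter slices $k=-1$ and $c=-1$ via the scaling $\lambda^2 g_{\lambda^2 k,\lambda c}=g_{k,c}$, and for $c\le 0$ it proves monotonicity by showing $u_{c+\delta}-u_c>0$ pointwise rather than through the Hessian identity. Both of those choices are interchangeable with yours. However, two points in your plan are genuine gaps. First, the condition ``precisely when $c^2<-k$'' for coercivity/convexity is wrong as stated: the domain is $c<\sqrt{-k}$, which for $c<0$ contains all of $(-\infty,0)$ even when $c^2\ge -k$. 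Read literally, your existence argument would omit the entire region $c\le -\sqrt{-k}$, which is most of $\mathcal{D}$ and is exactly the region needed for the application. The fix is easy --- for $c\le 0$ the boundary terms $-c\int_{\partial\Sigma}e^u$ and $-c\int_{\partial\Sigma}e^u\phi^2$ have the favorable sign, so no trace inequality is needed there; the sharp inequality is required only for $0<c<\sqrt{-k}$ --- but you must say this, since otherwise the stated threshold excludes the main case.

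Second, the limit $\hat L(k,-\infty)=2\pi\chi(\Sigma)$ is an indeterminate form $(-\infty)\cdot 0$, and ``feeding $L\to 0$ into Gauss--Bonnet once one controls $A$ from above'' does not close it: you need $kA(k,c)\to 0$, i.e.\ $A\to 0$ with no remainder, and a collar barrier only controls $e^u$ near $\partial\Sigma$, not the interior contribution to $A=\int_\Sigma e^{2u}$. The paper flags this as the one limit that cannot be extracted from the $k=-1$ family alone: it writes $\hat L(k,-\lambda)=\hat L(k/\lambda^2,-1)=-L(k/\lambda^2,-1)$ by scaling and then uses continuity of the $c=-1$ family at $k=0$ together with $L(0,-1)=-2\pi\chi(\Sigma)$. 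Equivalently, one can get global pointwise decay $u_c\le v_0-\log(-c)\to-\infty$ from the monotonicity of the $c=-1$ solutions $v_k$, whence $A\to 0$; either way you need the second one-parameter family (or some substitute for it), and your proposal does not supply this ingredient. The remaining soft spots --- differentiability of $c\mapsto u_c$ (the paper proves $C^1$ dependence via an auxiliary Neumann function and $L^\infty$/$H^1$ estimates) and the blow-up $L\to\infty$ at $c=\sqrt{-k}$ (imported from Rupflin's Lemma 3.1) --- are correctly identified by you as the technical work, and your sketches there are consistent with how the paper handles them.
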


Here we use notation $L(k,c):= \lim_{(k',c')\rightarrow (k,c)} 
L(k',c')$ if $(k,c)$ is not in $\mathcal{D}$. 

Actually, existence and uniqueness are already known to us. The main results developed in this paper are to establish the relation of geometric properties of $g_{k,c}$ and the choice of $k,c$. In particular, we are interested in the properties of function $L(k,c)$. By the work of Rupflin \cite{rupflin2021hyperbolic}, we already know the results of case $k\le 0,0\le c<\sqrt{-k}$. So in this paper, we will focus on the case $k\le 0$ and $c\le 0$. 
Up to a scaling factor, we only need to focus on the cases of fixing $k=-1$ and let $c$ change, and fixing $c=-1$ and let $k$ change. The key result in the application to the weak uniformization of triple junction surfaces is the monotonicity property and the limit behavior of function $\hat{L}(-1,c)$. Especially the limit $\hat{L}(-1,-\infty)=2\pi\chi(\Sigma)$ can only be computed after studying the case fixing $c=-1$.
Unlike the complicated trace theorem and energy estimate considered in the work \cite{rupflin2021hyperbolic}, we only need the standard Maximum Principle to develop the energy estimate for the case of $k\le 0,c\le 0$ to prove Theorem \ref{thm:surface_with_boundary_introduction}. 

For the case of positive Euler characteristic, including $\chi(\Sigma)>0$, the surface with boundary, and $\chi(M)>0$, the triple junction surfaces, things get complicated. Usually, we will try to minimize a suitable energy functional to get the existence of hyperbolic metric when Euler characteristic is negative, but this method is not work in this case, even in the usual surface without boundary case $\mathbb{S}^2$. Readers may refer to \cite{chang1988conformal,ding2001proof,mazzeo2002curvature,struwe2005flow} for the uniformization on sphere, including the prescribed Gaussian curvature problem (known as Nirenberg's problem).
In particular, Malchiodi \cite{malchiodi2017variational} 
considered the singular version of uniformization with possible positive Gaussian curvature after uniformization.

In Section \ref{sec:preliminary}, we will give the definitions of triple junction surfaces and two types of uniformization for triple junction surfaces. In Section \ref{sec:uniformization_of_surface_with_boundary}, we will consider the uniformization for surfaces with boundary to prove Theorem \ref{thm:surface_with_boundary_introduction}.
Then in Section \ref{sec:weak_uniformization_of_triple_junction_surface}, we will use the uniformization results for surfaces with boundary to finish the proof of weak uniformization of triple junction surfaces.

\section{Preliminary}%
\label{sec:preliminary}

In this section, we will give the definition of triple junction surfaces and the basic concepts of uniformization of triple junction surfaces.

We will follow the definitions of triple junction surfaces from \cite{wang2021curvature}. Instead of multiple junction surfaces, we only focus on the triple junction surfaces in this paper.

Let $\Sigma_1,\Sigma_2,\Sigma_3$ to be three (orientable)
2-dimensional surfaces with smooth boundary $\partial \Sigma_1,\partial \Sigma_2,\partial \Sigma_3$.
Let $\Gamma$ be a one-dimensional curves.

\begin{definition}
	We say $M=(\Sigma_1,\Sigma_2,\Sigma_3;\Gamma)$ is a \textit{triple junction surface} if in addition, there are three diffeomorphism $\varphi_i: \Gamma\rightarrow \partial \Sigma_i$ with $i=1,2,3$. This means $\partial \Sigma_1,\partial \Sigma_2,\partial \Sigma_3$ are diffeomorphic to each other.
	\label{def:triple}
\end{definition}

Now we can define the metric on $M$.

\begin{definition}
	We say a tuple $g=(g_1,g_2,g_3)$ where $g_i$ is a metric on $\Sigma_i$ defines a metric on $M$ if there is a metric $g_\Gamma$ on $\Gamma$ such that $\varphi_i: \Gamma\rightarrow \partial \Sigma_i$ becomes an isometric for each $i=1,2,3$.

\end{definition}

In the later on, we can identify the $\partial \Sigma_i$ with $\Gamma$ if there is no confusion. For example, when we talk about the restriction of metric $g_i$ on $\Gamma$, we actually means the metric $g_i|_{\partial \Sigma_i}$ on $\partial \Sigma_i$, or the pull-back metric $\varphi_{i}^*g_i|_{\partial \Sigma_i}$.

Given metric $g$ on $M$, we can define the unit outer normal vector field $\tau_i$ of $\Gamma$ in $\Sigma_i$.
Usually, the $\tau_i$ is chosen such that the following holds.
\begin{itemize}
	\item $g_i(\tau_i,\tau_i)=1$ along $\Gamma$.
	\item $g_i(\tau_i,\alpha)=0$ for any $\alpha $ in the tangent space of $\Gamma$.
	\item $\tau_i$ points outward of $\Sigma_i$.
\end{itemize}

Besides, we can choose a unit tangent vector field $\eta$ along $\Gamma$ since $\Gamma$ is a curve. Note that since $g_i$ is the same when restricting on $\Gamma$, we do not need to make the difference of tangent vector field of $\Gamma$ in different surface $\Sigma_i$.

Now, for any $p \in \Gamma$, we can define the geodesic curvature for $\Gamma$ in $\Sigma_i$ by
\[
	\kappa_i(p)=g_i(\nabla_{\eta}^{\Sigma_i}\eta, \tau_i)
\]
where $\nabla_{X}^{\Sigma_i}Y$ be the coderivative on $\Sigma_i$ with respect to metric $g_i$.

\begin{definition}
	We say the metric $g$ on $M$ is \textit{compatible} with $M$ if the sum of geodesic curvature of $\Gamma$ is zero. That is saying
\[
		\sum_{i =1}^{3}\kappa_i(p)=0\quad \text{ for all } p \in \Gamma.
	\]
	\label{def_metric}
\end{definition}

Actually, the compatible condition for metric is coming from the fact that whether $M$ can be minimality immersed to a big ambient manifold as a triple junction surface. For example, if $M$ is indeed a minimal immersed triple junction surface in $\mathbb{R}^3 $, then the sum of outer normal $\tau_i$ should be zero. This will imply the sum of geodesic curvature $\kappa_i$ is zero after a simple calculation.
So this is a necessary condition if we want to study the metric and conformal structure of a minimal triple junction surface $M$ intrinsically.

\begin{remark}
	The compatible condition says we have a restriction of derivative of $g_i$ along the direction $\tau_i$. Roughly speaking, this compatible condition says we have the sum of $\frac{\partial g_i}{\partial \tau_i}$ is zero along $\Gamma$.
\end{remark}

Now we can talk about the uniformization for a triple junction surface $M$. There are two kinds of uniformization we can talk about. 
We list them as follows. 
\begin{itemize}
	\item (Strong uniformization) Find a new metric $\overline{g}=(\overline{g}_1,\overline{g}_2, \overline{g}_3)$ such that the following holds.
		\begin{itemize}
			\item $\overline{g}$ compatible with $M$.
			\item $\overline{g}_i$ is conformal equivalent to $g_i$ on $\Sigma_i$.
			\item Gaussian curvature of $g_i$ is a fixed constant $k \in \mathbb{R} $ where $k$ does not rely on $i$.
		\end{itemize}
		
		\item (Weak uniformization) Find a new metric $\overline{g}=(\overline{g}_1,\overline{g}_2, \overline{g}_3)$ and new diffeomorphisms on boundary $\overline{\varphi}_i: \Gamma\rightarrow \partial \Sigma_i$ such that the following holds
		\begin{itemize}
			\item $\overline{g}$ is compatible with $M$ under new boundary diffeomorphisms.
			\item $\overline{g}_i$ is conformal equivalent to $g_i$ on $\Sigma_i$.
			\item Gaussian curvature of $g_i$ is a fixed constant $k \in \mathbb{R} $ where $k$ does not rely on $i$.
			\item Geodesic curvature $\kappa_i$ is constant along $\Gamma$. Different $i$ might have different constant here.
		\end{itemize}
\end{itemize}

Roughly speaking, in the strong uniformization, we will only uniform interior Gaussian curvature but we will fix the boundary diffeomorphisms. This is a reasonable way to do uniformization since in general we still want to keep the local structure after uniformization in some sense. But before we start the strong uniformization, we might need to develop the full elliptic PDE theories on triple junction surfaces, especially the regularity of solutions. Besides that, one drawback of strong uniformization is, since the different diffeomorphisms on the boundary will give the different conformal structure(in the sense of an equivalent class of metric). But the space of diffeomorphisms is quite large in general, at least it is infinite-dimensional. So the Moduli space related to strong uniformization is quite complicated.

But for weak uniformization, we relax the boundary condition for $M$. Roughly speaking, we only require the new metric $\overline{g}$ to have the same boundary length when restricting on $\Gamma$. So in this weak condition, we can uniform the interior Gaussian curvature and the boundary geodesic curvature at the same time. The goal of weak uniformization is just to find this kind of $\overline{g}$ such that the sum of the boundary geodesic curvatures is zero.

So we can see that weak uniformization is more like doing uniformization on each $\Sigma_i$ and to see if they can be matched. We do not need to do PDE stuff on triple junction surfaces anymore. From the Teichm\"uller theory for surfaces with boundary, we can see if we want to study the Teichm\"uller space of triple junction surface, it should be finite-dimensional at least, which is easier than the case of strong uniformization.

In this paper, we will only focus on weak uniformization.

Now we can consider the relation of Euler characteristic and the Gaussian curvature of triple junction surfaces.
Recall that the Euler characteristic is well-defined for each triple junction surface $M$ since we can view it as a two-dimensional finite CW-complex by identifying boundaries of $\Sigma_i$. By the results in algebraic topology, we can see
\[
	\chi(M)=\sum_{i =1}^{3}\chi(\Sigma_i).
\]

Now from the Gauss-Bonnet formula, we have
\begin{align*}
	\chi(M)={} & \sum_{i=1 }^{3}\chi(\Sigma_i)
	=\sum_{i =1}^{3}\left( \int_{ \Sigma_i} K(g_i)dA_i+ \int_{ \Gamma} \kappa_ids \right) \\
	={} & \sum_{i =1}^{3}\int_{ \Sigma_i} K(g_i)dA_i 
\end{align*}
where $K(g_i)$ is the Gaussian curvature of $g_i$, $dA_i(ds)$ are the area elements (the length element) of $\Sigma_i$ ($\Gamma$). Here we've used $g$ is compatible with $M$. So basically, after uniformization, the interior Gaussian curvature is completely determined by $\chi(M)$ upto a positive rescaling factor. 

In the main Theorem \ref{thm:uniform_triple}, although we will only consider the case $\chi(M)\le 0$, it still contains most of the triple junction surfaces with connected triple junction in some sense.
Indeed, for $\Sigma_i$, we have
\begin{align}
	\chi(\Sigma_i)=2-2\text{genus}(\Sigma_i)
	-\sharp(\text{components of }
	\partial \Sigma_i).
	\label{eq:Euler_formula}
\end{align}

So since we require $\Gamma$ to have only one component, $\chi(\Sigma_i)$ is at most 1. There are only two types of triple junction surfaces with $\chi(M)>0$. The first one is each of $\Sigma_i$ is a disk, and the second one is one of $\Sigma_i$ is a torus removed a disk and the remaining two are two disks. 

Moreover, for the case of $M$ composed by three disks, the uniformization result is known to us. Actually, we have the following result. 
\begin{proposition}
	If $M=(\Sigma_1,\Sigma_2,\Sigma_3;
	\Gamma)$ is a triple junction surface such that each $\Sigma_i$ is homeomorphic to a disk, then we can uniform it into a triple junction surface with Gaussian curvature 1 weakly.
	\label{prop:uniform_3disk}
\end{proposition}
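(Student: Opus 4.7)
The plan is to show that each $\Sigma_i$ can be uniformized conformally onto the closed upper hemisphere $\mathbb{S}^2_+$ of the round unit sphere, and that this choice automatically satisfies the weak uniformization conditions; an elementary computation will show it is in fact the only admissible choice.

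First I would identify the target model. A spherical cap of geodesic radius $r\in(0,\pi)$ carved from the unit round sphere has constant Gaussian curvature $1$, boundary length $2\pi\sin r$, and constant boundary geodesic curvature $\cot r$. Requiring the three uniformized disks to share a common boundary length $L$ forces $\sin r_i=L/(2\pi)$, so each $r_i\in\{\alpha,\pi-\alpha\}$ with $\alpha=\arcsin(L/(2\pi))\in(0,\pi/2]$ and consequently $\cot r_i\in\{\pm\cot\alpha\}$. Then $\sum_{i=1}^3\cot r_i=(n_+-n_-)\cot\alpha$ for some integers $n_+,n_-\ge 0$ with $n_++n_-=3$; since $3$ is odd, this sum can vanish only if $\cot\alpha=0$, i.e.\ $\alpha=\pi/2$. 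Hence each $\Sigma_i$ must be uniformized to $\mathbb{S}^2_+$, with boundary length $2\pi$ and zero boundary geodesic curvature.

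Next I would carry out the conformal uniformization. Because each $(\Sigma_i,g_i)$ is a smooth disk, the classical Riemann mapping theorem (in its smooth-up-to-the-boundary version) provides a conformal diffeomorphism $\phi_i\colon(\Sigma_i,[g_i])\to(\overline{\mathbb{D}},[g_{\mathrm{flat}}])$. Composing with the inverse stereographic projection $\psi\colon(\overline{\mathbb{D}},[g_{\mathrm{flat}}])\to(\mathbb{S}^2_+,[g_{\mathrm{sph}}])$, which is conformal and carries $\partial\mathbb{D}$ onto the equator, I set $\overline{g}_i:=(\psi\circ\phi_i)^*g_{\mathrm{sph}}$. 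By construction $\overline{g}_i$ is conformal to $g_i$ and $(\Sigma_i,\overline{g}_i)$ is isometric to $\mathbb{S}^2_+$, so $K(\overline{g}_i)\equiv 1$ in the interior and $\kappa_i\equiv 0$ on the boundary.

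Finally I would match the boundary data. Each $\partial\Sigma_i$ is now a round circle of length $2\pi$, so I equip $\Gamma$ with the metric $g_\Gamma$ of a round circle of circumference $2\pi$ and choose $\overline{\varphi}_i\colon\Gamma\to\partial\Sigma_i$ to be any isometries (which exist because any two round circles of equal length are isometric). The triple $(\overline{g},\overline{\varphi}_i)$ then satisfies all four weak uniformization conditions: $\overline{g}_i\sim g_i$, interior curvature equals the fixed constant $1$, each $\kappa_i$ is constant along $\Gamma$, and $\sum_i\kappa_i=0+0+0=0$ ensures compatibility. The only non-algebraic ingredient is the smooth Riemann mapping theorem used in the second step; everything else reduces to spherical-cap trigonometry and the choice of boundary identifications realizing the already-determined target configuration.
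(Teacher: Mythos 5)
Your proposal is correct and follows essentially the same route as the paper: use the uniqueness of the conformal structure on the disk to endow each $\Sigma_i$ with the standard hemisphere metric, observe the boundary geodesic curvatures vanish so the compatibility condition $\sum_i \kappa_i = 0$ holds trivially, and match the boundaries as circles of equal length. Your opening spherical-cap parity computation showing the hemisphere is the \emph{only} admissible choice is a correct bonus the paper does not include, but it is not needed for the existence claim being proved.
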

\begin{proof}
	Since the disk has a unique conformal structure, we can just choose the standard half-sphere metric $\overline{g}_i$ for each $i$ and they are conformal equivalent to the metric $g_i$.
	Now we can construct the isometries between $\partial \Sigma_i$ and $\mathbb{S}^1$, the unit circle.
	
	This metric is compatible with $M$ since the geodesic curvature for each boundary is zero. So we can uniform $M$ to be a triple junction surface with Gaussian curvature 1 interior.
\end{proof}

In summary, we have only one topological type that we do not know if we can do the uniformization. 

For the proof of uniformization for triple junction surface with $\chi(M)\le 0$, we need to consider the uniformization of surfaces with boundary first.

\section{Uniformization of surfaces with boundary}%
\label{sec:uniformization_of_surface_with_boundary}

In this section, we will consider the surface $\Sigma$ with smooth boundary $\partial \Sigma$ which has only one component.
Without loss of generality, we assume $\Sigma$ is not a disk since the conformal structure on the disk is completely known to us. Note that the Euler characteristic of $\Sigma$ is always an odd number if $\Sigma$ has only one boundary component by formula (\ref{eq:Euler_formula}), we only need to consider the case $\chi(\Sigma)<0$.

So given such $\Sigma$ with metric $g$ on it, our fundamental question is the following,
\begin{question}
	Given which kind of $k \in \mathbb{R} , c \in \mathbb{R} $, we can find a conformal metric $\overline{g}$ such that the Gaussian curvature of $\Sigma$ with metric $\overline{g}$ is just $k$ and the geodesic curvature of boundary is $c$ under metric $\overline{g}$? If we can find such of $\overline{g}$, how about the uniqueness, and moreover, how does this metric $\overline{g}$ depend on $k, c$?
\end{question}

If we write the conformal metric $\overline{g}=e^{2u}g$, let $K(x):=K_g(x)$ be the Gaussian curvature of metric $g$, $\kappa_g(x)$ be the geodesic curvature of $\partial \Sigma$ under the metric $g$. Recall that the Gaussian curvature and the geodesic boundary curvature are characterised by
\begin{align}
	K_{\overline{g}} ={} & 
	e^{-2u}\left( K_{g}-\Delta u\right) \\
	\kappa_{\overline{g}} ={} & 
	e^{-u}\left( \kappa_{g}+ \frac{\partial u}{\partial \tau}\right) 
\end{align}
under conformal change of metric. Here, we still use $\tau:=\tau_g$ to denote the unit outer normal of $\partial \Sigma$ in metric $g$ and the Laplacian $\Delta:=\Delta_g$ being the Laplace-Beltrami operator with respect to the metric $g$. 

Using these two formulas, we know the existence and uniqueness of $\overline{g}$ are equivalent to the existence and uniqueness of the following PDE problem,
\begin{align}
	\begin{cases}
		-\Delta u=ke^{2u}-K, & \text{ in }\Sigma, \\
	\frac{\partial u}{\partial \tau}=c
		e^{u}-\kappa, & \text{ on }\partial \Sigma.
	\end{cases}
	\label{eq:pde_unform}
\end{align}

Recall that the Gauss-Bonnet formula says
\[
	2\pi\chi(\Sigma)=\int_{ \Sigma} KdA+\int_{ \partial \Sigma} \kappa ds.
\]

So if the problem (\ref{eq:pde_unform}) does have a solution, then $k,c$ should satisfies
\begin{align}
	2\pi\chi(\Sigma)=k \mathrm{Area}_{\overline{g}}
	(\Sigma)+c
	\text{Length}_{\overline{g}}(\partial \Sigma)
	\label{eq:Gauss_Bonnet}
\end{align}

Since we do not consider the disk case, we have $2\pi\chi(\Sigma)\le 0$. One of the necessary condition for (\ref{eq:pde_unform}) having a solution is $k,c$ cannot greater than 0 at the same time.

So study of problem (\ref{eq:pde_unform}) can be divided into three cases.
\begin{itemize}
	\item $k\le 0, c\le 0$. This case is easy to deal with since they have the same sign in the formula (\ref{eq:Gauss_Bonnet}). Most of the previous work belongs to this case. We will still focus on this case and try to give some more precise results, especially about how the geometric properties of new metrics related to the choice of different $k,c$.
	\item $k<0, c>0$. This case is much harder than the first case. The remarkable contribution to this case was made by M. Rupflin \cite{rupflin2021hyperbolic}. 
		To be more precise, when $k=-1$ and $c \in [0,1)$, she not only showed the existence and uniqueness of problem (\ref{eq:pde_unform}), but also established the relation between the boundary length and the prescribed constant $c$.
	\item $k>0, c<0$. Nothing is known to us in this case.
\end{itemize}

In this section, we will give the uniformization for the first two cases. In particular, we will prove the following theorem.

\begin{theorem}
	Let $(\Sigma,g)$ be a smooth compact oriented surface with boundary $\partial \Sigma$ and negative Euler characteristic. We let $\mathcal{D}:=\{ (k,c) \in \mathbb{R}^2 : k\le 0, c< \sqrt{-k}\}$ be a domain in $\mathbb{R}^2$. Then for any $(k,c)\in \mathcal{D}$, there exists a unique metric $g_{k,c}$ conformal to $g$ such that
	\begin{itemize}
		\item Gaussian curvature of $g_{k,c}$ will be identically to $k$.
		\item Geodesic curvature of boundary under metric $g_{k,c}$ is identically to $c$ along $\partial \Sigma$.
	\end{itemize}
	
	Moreover, we can establish the relation of the area and the boundary length of $\Sigma$ under metric $g_{k,c}$ when $k,c$ change in the following ways.

	We use the following notations,
	\begin{align*}
		L(k,c):={} & \mathrm{Length}
		_{g_{k,c}}
		(\partial \Sigma)\\
		A(k,c):={} & \mathrm{Area}
		_{g_{k,c}}(\Sigma)
	\end{align*}
	
	Then $L(k,c), A(k,c)$ are all continuous functions on the domain $\mathcal{D}$. 
	In particular, we have the monotonicity properties for the following four functions.
	\begin{itemize}
		\item For function $L:\mathcal{D}\rightarrow \mathbb{R} $, it is strictly increasing along both positive directions of $k,c$ with the following asymptotic behavior,
			\begin{align}
\arraycolsep=1.4pt
				\begin{array}{rll}
					L(-\lambda^2,\lambda)=&
				+\infty & \text{ for any }
				\lambda>0,\\
				L(k,-\infty)= & 0
				&\text{ for any }k\le 0,\\
				L(0,c)=&\frac{2\pi \chi(\Sigma)}{c}\quad  & 
			\text{ for any }c<0,\\
			L(-\infty,c)=& 0 & 
			\text{ for any }c \in \mathbb{R}.
			\end{array}
			\label{eq:L_asy}
		\end{align}
	\item For function $A:\mathcal{D}\rightarrow \mathbb{R} $, it is strictly increasing along both positive directions of $k,c$ with the following asymptotic behavior,
		\begin{align}
			\arraycolsep=1.4pt
			\begin{array}{rll}
					A(-\lambda^2,\lambda)=&
				+\infty & \text{ for any }
				\lambda> 0,\\
				A(k,-\infty)= & 0
				&\text{ for any }k\le 0,\\
				A(0,c)=&\frac{A_0
				}{c^2}\quad  & 
			\text{ for any }c<0,\\
			A(-\infty,c)=& 0 & 
			\text{ for any }c \in \mathbb{R}.
			\end{array}
			\label{eq:A_asy}
		\end{align}
		where $A_0= A(0,-1)$.
	\item For function $ \hat{L}(k,c):=cL(k,c): \mathcal{D}\rightarrow \mathbb{R} $, it is strictly increasing along the positive direction of $c$ when $k<0$ with the following asymptotic behavior,
		\begin{align}
			\arraycolsep=1.4pt
				\begin{array}{rll}
				\hat{L}(-\lambda^2,\lambda)=&
				+\infty & \text{ for any }
				\lambda> 0,\\
				\hat{L}(k,-\infty)= & 
				2\pi \chi(\Sigma)
				&\text{ for any }k\le 0,\\
				\hat{L}(0,c)=&2\pi \chi(\Sigma)\quad  & 
			\text{ for any }c<0,\\
			\hat{L}(-\infty,c)=& 0 & 
			\text{ for any }c \in \mathbb{R}.
			\end{array}
			\label{eq:cL_asy}
		\end{align}
		$\hat{L}$ still has the monotonicity property when fixing $c$. To be precise, $\hat{L}(k,c)$ is strictly decreasing when $c<0$ and $\hat{L}(k,c)$ is strictly increasing when $c>0$ and $\hat{L}(k,0)=0$ all the time.
			
	\item For function $ \hat{A}(k,c):=kA(k,c): \mathcal{D}\rightarrow \mathbb{R} $, it is strictly increasing along the negative direction of $c$ when $k<0$ with the following asymptotic behavior,
		\begin{align}
			\arraycolsep=1.4pt
				\begin{array}{rll}
				\hat{A}(-\lambda^2,\lambda)=&
				-\infty & \text{ for any }
				\lambda> 0,\\
				\hat{A}(k,-\infty)= & 
				0
				&\text{ for any }k\le 0,\\
				\hat{A}(0,c)=&0
				\quad  & 
			\text{ for any }c<0,\\
			\hat{A}(-\infty,c)=& 
			2\pi \chi(\Sigma)& 
			\text{ for any }c \in \mathbb{R}.
			\end{array}
			\label{eq:KA_asy}
		\end{align}
		$\hat{A}$ still has the monotonicity property when fixing $c$. To be precise, $\hat{A}(k,c)$ is strictly increasing when $c<0$ and $\hat{A}(k,c)$ is strictly decreasing when $c>0$ and $\hat{A}(k,0)=2\pi \chi(\Sigma)$ all the time.
	\end{itemize}
	\label{thm:uniform_surface}
\end{theorem}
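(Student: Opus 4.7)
The plan rests on three blocks: (i) smoothness of $u_{k,c}$ in $(k,c)$ via the implicit function theorem, (ii) sign-of-derivative arguments via the maximum principle applied to the linearized PDE, and (iii) asymptotic limits via the Gauss--Bonnet identity $2\pi\chi(\Sigma)=kA(k,c)+cL(k,c)$, equivalently $\hat{A}(k,c)+\hat{L}(k,c)=2\pi\chi(\Sigma)$. The regime $c\ge 0$ is covered by Rupflin \cite{rupflin2021hyperbolic}, so I focus on the new case $c\le 0$ with $k\le 0$, where existence is produced by minimising the natural energy functional (coercive because the signs of the $-k\int_\Sigma e^{2u}$ and $-c\int_{\partial\Sigma}e^u$ terms are favourable) and uniqueness follows from a standard maximum-principle argument for the difference of two solutions.

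For smoothness I would apply the implicit function theorem to the map $F(u;k,c):=(-\Delta u-ke^{2u}+K,\,\partial_\tau u-ce^u+\kappa)$. Its linearization $\mathcal{L}v=(-\Delta v-2ke^{2u}v,\,\partial_\tau v-ce^u v)$ is an isomorphism on suitable H\"older spaces, since $-2k\ge 0$ and the defining inequality $c<\sqrt{-k}$ of $\mathcal{D}$ is precisely the condition keeping the first eigenvalue of $\mathcal{L}$ strictly positive. Differentiating \eqref{eq:pde_unform} in $c$ and in $k$ yields $\mathcal{L}(\partial_c u)=(0,e^u)$ and $\mathcal{L}(\partial_k u)=(e^{2u},0)$, and for $c\le 0$ the maximum principle produces $\partial_c u>0$ and $\partial_k u>0$ on $\overline{\Sigma}$: a non-positive interior minimum is excluded by the favourable sign of the coefficient $-2ke^{2u}$, while a non-positive boundary minimum is excluded by Hopf's lemma combined with the boundary equation (the positive forcing $e^u$ in the case of $\partial_c u$, and the coefficient $-ce^u\ge 0$ acting against a negative value in the case of $\partial_k u$). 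Strict monotonicity of $L$ and $A$ in each of $k$ and $c$ follows at once. For $\hat{L}=cL$, I would differentiate and test the equation for $\partial_c u$ against itself, exploiting $-2k\ge 0$, to obtain $\partial_c\hat{L}>0$ when $k<0$; the corresponding monotonicities of $\hat{A}=kA$ are then automatic from the Gauss--Bonnet identity, and the monotonicity of $\hat{L},\hat{A}$ in $k$ at fixed $c$ follows from those of $L,A$ together with the sign of $c$ or $k$.

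Most asymptotic limits are direct corollaries of Gauss--Bonnet plus continuity of $(k,c)\mapsto u_{k,c}$: $L(0,c)=2\pi\chi(\Sigma)/c$, $\hat{L}(0,c)=2\pi\chi(\Sigma)$ and $\hat{A}(0,c)=0$ are specialisations at $k=0$, and the identity $\hat{L}+\hat{A}=2\pi\chi(\Sigma)$ always holds. The conformal rescaling $\overline{g}\mapsto\lambda^2\overline{g}$ sends $(k,c)$ to $(\lambda^{-2}k,\lambda^{-1}c)$ with $L\mapsto\lambda L$ and $A\mapsto\lambda^2 A$, so it identifies $(-1,-\lambda)\sim(-1/\lambda^2,-1)$ and reduces the study of $\hat{L}(-1,c)$ as $c\to-\infty$ to the limit $k\to 0^-$ along the slice $\{c=-1\}$; in particular the key identity $\hat{L}(-1,-\infty)=-L(0,-1)=2\pi\chi(\Sigma)$ drops out of continuity of $L$ up to $k=0$. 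The remaining direct limit $u_{-1,c}\to-\infty$ as $c\to-\infty$ is obtained by comparison with constant sub- and supersolutions built from the boundary condition $\partial_\tau u=ce^u-\kappa$, which yields $A\to 0$ and hence $L\to 0$; the slice $\{c=-1\}$ as $k\to-\infty$ is handled symmetrically.

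The main obstacle is the endpoint $\partial\mathcal{D}=\{c=\sqrt{-k}\}$, where the linearization $\mathcal{L}$ loses coercivity and one must show $L(-\lambda^2,\lambda)=+\infty$. By monotonicity in $c$, $L$ converges as $c\nearrow\sqrt{-k}$, and a finite limit would have to be excluded by a compactness-and-blow-up argument: a uniform $L$-bound along a sequence $(k_n,c_n)\to(-\lambda^2,\lambda)$ would, via a Moser--Trudinger inequality and standard elliptic regularity, yield a $C^{2,\alpha}$ limit $u_*$ solving \eqref{eq:pde_unform} at the endpoint, but the nontrivial kernel of $\mathcal{L}$ at $u_*$ must then be shown to be inconsistent with the explicit description of hyperbolic metrics with horocyclic boundary in the fixed conformal class of $\Sigma$. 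Making this endpoint analysis rigorous is the main technical step of the proof.
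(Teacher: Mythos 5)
Your overall architecture (scaling reduction to the slices $k=-1$ and $c=-1$, maximum-principle sign arguments, Gauss--Bonnet for the limits, citing Rupflin for $c\ge 0$) matches the paper. The one genuinely different ingredient is the implicit function theorem for smooth dependence; the paper instead proves pointwise strict monotonicity by directly comparing $u_{c+\delta}$ with $u_c$ via the maximum principle (which already yields strict monotonicity of $L$ and $A$ without any differentiability), and then obtains continuity and $C^1$-dependence from explicit $L^\infty$ and energy estimates built on an auxiliary Neumann function $h$ solving $\Delta_c h=H_0$, $\partial h/\partial\tau_c=1$. Your IFT route is fine for $c\le 0$, but note that for $0<c<\sqrt{-k}$ the invertibility of $\mathcal{L}$ is exactly the sharp trace inequality of Rupflin, which you assert rather than prove; also, your maximum-principle derivation of $\partial_k u>0$ only works for $c\le 0$, so the strict monotonicity of $L$ in $k$ at fixed $c>0$ does not ``follow at once'' --- the paper obtains it from the identity $cL(k,c)=\hat{L}(-1,c/\sqrt{-k})$ together with the monotonicity of $\hat{L}(-1,\cdot)$ on $(0,1)$.

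The genuine gap is the one you flag yourself: the endpoint behaviour $L(-\lambda^2,\lambda)=+\infty$, equivalently $\hat L(-1,c)\to+\infty$ as $c\to 1^-$, which is essential for the application to Theorem \ref{thm:uniform_triple} and is left as ``the main technical step.'' The paper closes this by citation to Lemma 3.1 of \cite{rupflin2021hyperbolic} rather than by a new argument, and since you already invoke Rupflin for the whole regime $c\ge 0$, this divergence is part of what you are citing; your proposed compactness-and-blow-up alternative is therefore unnecessary, and as sketched it is also misaimed: the obstruction to a limit solution at $c=\sqrt{-k}$ is not a ``nontrivial kernel of $\mathcal{L}$'' but the fact that a compact hyperbolic surface with horocyclic boundary is conformally a punctured (non-compact) surface, hence not conformal to $(\Sigma,g)$. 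Two smaller soft spots: the limit $L(k,-\infty)=0$ cannot be obtained from constant super-solutions (a negative constant is not a supersolution of $-\Delta u=1-e^{2u}$); the clean route, which the paper takes and for which you already have all the tools, is the scaling identity $L(k,-\lambda)=\lambda^{-1}L(k/\lambda^2,-1)\to 0$ combined with continuity of the $c=-1$ family up to $k=0$ --- and this same identity, not a direct analysis at $c=-\infty$, is what produces the key value $\hat L(k,-\infty)=-L(0,-1)=2\pi\chi(\Sigma)$.
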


\begin{remark}
	The above theorem still holds when $\partial \Sigma$ has more than one component. The length $L(k,c)$ will be the sum of lengths of all possible components of $\partial \Sigma$. But such a result cannot help us to solve weak uniformization for triple junction surfaces with $\Gamma$ having more than one component. See Remark \ref{rmk:multiple_boundary} for details.
\end{remark}

Note by Gauss-Bonnet formula, we have $\hat{L}+\hat{A}=2\pi \chi(\Sigma)$.
So the third and fourth cases in Theorem \ref{thm:uniform_surface} are equivalent to each other, and we only need to prove one of them.

Moreover, note that for any metric $g$ and $\lambda>0$, the scaled metric $\lambda^2 g$ has Gaussian curvature $K_{\lambda^2g}=\frac{K_g}{\lambda^2}$ and geodesic boundary curvature $\kappa_{\lambda^2g}=\frac{\kappa_g}{\lambda}$.
So we only need to prove the Theorem \ref{thm:uniform_surface} in some special case like we fix $k=-1$ or $c=-1$.
The following two theorems are what we want to prove in this section.

\begin{theorem}
	
	Let $(\Sigma,g)$ be a smooth compact oriented surface with boundary $\partial \Sigma$ and negative Euler characteristic.
	Then for any $c<1$, there is a unique metric $g_{-1,c}$ conformal to $g$ such that $(\Sigma,g_{-1,c})$ has constant Gaussian curvature $-1$ and constant geodesic curvature $c$ on the boundary.

	Moreover, the three functions $L(-1,c),A(-1,c),\hat{L}(-1,c)$ are all strictly increasing continuous function with limit $+\infty$ when $c\rightarrow 1$.
	\label{thm:fix_K}
\end{theorem}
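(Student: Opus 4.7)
Unpacking the statement, I need to find a smooth $u_c$ solving (\ref{eq:pde_unform}) with $k=-1$, i.e.\ $-\Delta u=-e^{2u}-K$ in $\Sigma$ and $\partial u/\partial\tau=c e^u-\kappa$ on $\partial\Sigma$, and then track how the resulting geometric quantities depend on $c$. Since Rupflin's work \cite{rupflin2021hyperbolic} already provides existence, uniqueness, smooth dependence, strict monotonicity of $L$, and the blow-up $L(-1,c)\to+\infty$ as $c\to 1^-$ on the range $c\in[0,1)$, I would split $(-\infty,1)$ into that interval and $(-\infty,0]$, treat the latter by hand, and glue.

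\textbf{Existence and uniqueness for $c\le 0$.} For $c\le 0$, both $u\mapsto -e^{2u}$ and $u\mapsto ce^u$ are non-increasing in $u$, so the PDE is quasi-monotone. I would obtain a solution as a minimiser of
\[
E(u)=\tfrac12\int_\Sigma |\nabla u|^2\,dA+\int_\Sigma Ku\,dA+\int_{\partial\Sigma}\kappa u\,ds+\tfrac12\int_\Sigma e^{2u}\,dA-c\int_{\partial\Sigma} e^u\,ds,
\]
whose Euler--Lagrange equation is exactly (\ref{eq:pde_unform}) with $k=-1$. For $c\le 0$ the last term has a favourable sign; combined with $\chi(\Sigma)<0$ and a Moser--Trudinger-type trace inequality, this makes $E$ coercive and weakly lower semicontinuous on $H^1(\Sigma)$, so a minimiser exists and is smooth by elliptic regularity. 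For uniqueness, given two solutions $u_1,u_2$, set $w=u_1-u_2$: a positive interior maximum is ruled out by $-\Delta w=-(e^{2u_1}-e^{2u_2})<0$ at such a point, and a positive boundary maximum is ruled out by combining Hopf ($\partial w/\partial\tau>0$) with the identity $\partial w/\partial\tau=c(e^{u_1}-e^{u_2})\le 0$ that follows from $c\le 0$.

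\textbf{Monotonicity, continuity, and limits.} Differentiating (\ref{eq:pde_unform}) in $c$, the derivative $\dot u:=\partial u_c/\partial c$ satisfies
\[
-\Delta\dot u+2e^{2u_c}\dot u=0\ \text{in }\Sigma,\qquad \partial\dot u/\partial\tau-ce^{u_c}\dot u=e^{u_c}\ \text{on }\partial\Sigma.
\]
The same max-principle/Hopf pair used for uniqueness shows the linearised operator is injective, hence invertible by Fredholm theory on the natural Hölder spaces, so the implicit function theorem produces smooth dependence of $u_c$ on $c$ and hence continuity of $L,A,\hat L$. To fix the sign of $\dot u$, I would apply the strong maximum principle and Hopf to $v:=-\dot u$: a positive interior max is ruled out because $-\Delta v=-2e^{2u_c}v\ge 0$ forces $v\le 0$ there, while a positive boundary max would force $cv>1$, impossible for $c\le 0$; the strong version (strong MP at an interior zero, Hopf at a boundary zero) then upgrades $\dot u\ge 0$ to $\dot u>0$ on $\overline\Sigma$. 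Consequently
\[
\dot L(c)=\int_{\partial\Sigma}e^{u_c}\dot u\,ds>0,\qquad \dot A(c)=2\int_\Sigma e^{2u_c}\dot u\,dA>0,
\]
so $L$ and $A$ are strictly increasing on $(-\infty,0]$; Rupflin covers $[0,1)$. The Gauss--Bonnet identity (\ref{eq:Gauss_Bonnet}) at $k=-1$ reads $-A(c)+cL(c)=2\pi\chi(\Sigma)$, i.e.\ $\hat L(c)=A(c)+2\pi\chi(\Sigma)$, so strict monotonicity of $\hat L$ comes for free from that of $A$. For the limits, Rupflin's blow-up $L(c)\to+\infty$ as $c\to 1^-$ combined with Gauss--Bonnet propagates to $A(c)=cL(c)-2\pi\chi(\Sigma)\to+\infty$ and $\hat L(c)\to+\infty$.

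\textbf{Main obstacle.} The decisive sign $\dot u>0$ hinges on $c\le 0$: a positive boundary maximum of $-\dot u$ is incompatible with Hopf precisely because $c\le 0$. For $c\in(0,1)$ this naive argument fails, which is why Rupflin's more delicate trace and energy estimates are needed on the complementary half of the range.
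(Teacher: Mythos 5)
Your proposal is correct and follows the same overall skeleton as the paper: split $(-\infty,1)$ at $c=0$, handle $c\le 0$ by maximum-principle/Hopf arguments, quote Rupflin for $c\in[0,1)$ and for the blow-up at $c=1$, and use Gauss--Bonnet ($\hat L - A = 2\pi\chi(\Sigma)$) to transfer monotonicity among $L$, $A$, $\hat L$. The differences are in the middle layer. First, you build existence for $c\le 0$ variationally, while the paper simply cites Rupflin's existence result for all $c<1$ and works relative to the hyperbolic background with geodesic boundary (so $K\equiv -1$, $\kappa\equiv 0$), which simplifies every subsequent equation. Second, and more substantively, you obtain smooth dependence of $u_c$ on $c$ via injectivity of the linearised Robin operator plus Fredholm theory and the implicit function theorem, and then read off $\dot L>0$, $\dot A>0$ from the sign of $\dot u$. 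The paper instead proves continuity and $C^1$-dependence by hand: it compares $u_{c+\delta}$ with $u_c$ directly by the maximum principle (Proposition \ref{prop:continous}), establishes a quantitative $L^\infty$ and $H^1$ estimate for the linearised problem using an auxiliary function $h$ with $\Delta_c h = H_0$, $\partial h/\partial\tau_c = 1$ (Propositions \ref{prop_H1_estimate}, \ref{prop_C1_uc}), and derives strict monotonicity of $L,A$ from the pointwise inequality $u_{c+\delta}>u_c$ rather than from $\dot u>0$. Your IFT route is cleaner but outsources the solvability of the Robin problem to standard elliptic theory; the paper's route is self-contained and also yields the uniform estimates it reuses later. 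One point you gloss over: strict monotonicity of $L(-1,\cdot)$ itself (as opposed to $\hat L = cL$) on $[0,1)$ is not literally Rupflin's statement; the paper supplies the missing step by a short trace-inequality argument showing $\int_{\partial\Sigma} w\,ds>0$ for the solution $w$ of the linearised problem, and you would need the analogous observation since your Hopf argument for $\dot u>0$ genuinely breaks down once $c>0$, as you yourself note.
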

\begin{theorem}
	Let $(\Sigma,g)$ be a smooth compact oriented surface with boundary $\partial \Sigma$ and negative Euler characteristic.
	Then for any $k\le 0$, there is a unique metric $g_{k,-1}$ conformal to $g$ such that $(\Sigma,g_{k,-1})$ has constant Gaussian curvature $k$ and constant geodesic curvature $-1$ on the boundary.

	Moreover, the three functions $L(k,-1), A(k,-1),\hat{A}(k,-1)$ are all strictly increasing continuous function with some special values $L(0,-1)=-2\pi \chi(\Sigma), \hat{A}(0,-1)=0$.
	\label{thm:fix_c}
\end{theorem}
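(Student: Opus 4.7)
The plan is to solve, for each $k \le 0$, the semilinear Neumann problem
\begin{equation*}
-\Delta u = k e^{2u} - K \text{ in } \Sigma, \qquad \frac{\partial u}{\partial \tau} = -e^u - \kappa \text{ on } \partial\Sigma,
\end{equation*}
as the unique minimizer of the convex functional
\begin{equation*}
F_k(u) := \frac{1}{2}\int_\Sigma |\nabla u|^2\,dA - \frac{k}{2}\int_\Sigma e^{2u}\,dA + \int_\Sigma Ku\,dA + \int_{\partial\Sigma} e^u\,ds + \int_{\partial\Sigma} \kappa u\,ds
\end{equation*}
on $H^1(\Sigma)$. Since $k\le 0$, each nonlinear term enters with nonnegative coefficient, so $F_k$ is strictly convex and any critical point is the unique minimizer. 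Splitting $u = \bar u + \tilde u$ with $\bar u$ the area average and rewriting the linear part as $2\pi\chi(\Sigma)\bar u + O(\|\tilde u\|_{H^1})$ via Gauss-Bonnet, coercivity follows from: the linear term dominates as $\bar u \to -\infty$ because $\chi(\Sigma)<0$; for $\bar u \to +\infty$ one uses $-\tfrac{k}{2}\int e^{2u}\,dA$ when $k<0$, and the boundary exponential $\int_{\partial\Sigma} e^u\,ds \ge |\partial\Sigma|\,e^{\bar u - C\|\tilde u\|_{H^1}}$ (Jensen applied to the trace) when $k=0$; and the transverse direction $\|\tilde u\|_{H^1}\to\infty$ is controlled by $\tfrac{1}{2}\int|\nabla\tilde u|^2$ together with Poincaré. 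Standard elliptic regularity then upgrades the minimizer $u_k$ to a smooth classical solution.

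For continuity and monotonicity, I linearize at $u_k$ and study the operator
\begin{equation*}
\mathcal L_k v = -\Delta v - 2k e^{2u_k} v \text{ in } \Sigma, \qquad \frac{\partial v}{\partial \tau} + e^{u_k}v = 0 \text{ on } \partial\Sigma,
\end{equation*}
whose quadratic form $\int|\nabla v|^2 - 2k\int e^{2u_k}v^2 + \int_{\partial\Sigma} e^{u_k}v^2$ is manifestly positive-definite when $k\le 0$. Hence $\mathcal L_k$ is invertible, the implicit function theorem gives smooth dependence of $u_k$ on $k$, and differentiating the PDE in $k$ produces $w := \partial_k u_k$ satisfying $\mathcal L_k w = e^{2u_k} > 0$ with the same Robin boundary condition; the maximum principle (nonnegative zero-order coefficient, positive Robin weight) forces $w > 0$ on $\overline{\Sigma}$. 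Therefore $L(k,-1) = \int_{\partial\Sigma} e^{u_k}\,ds$ and $A(k,-1) = \int_\Sigma e^{2u_k}\,dA$ are strictly increasing continuous functions of $k$; rearranging the Gauss-Bonnet identity $k A(k,-1) - L(k,-1) = 2\pi\chi(\Sigma)$ yields $\hat A(k,-1) = 2\pi\chi(\Sigma) + L(k,-1)$, so $\hat A(k,-1)$ inherits strict monotonicity, and setting $k=0$ gives the special values $L(0,-1) = -2\pi\chi(\Sigma)$ and $\hat A(0,-1) = 0$.

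The hardest part is coercivity at the degenerate endpoint $k=0$: the interior exponential term vanishes and control of the mean in the direction $\bar u \to +\infty$ must come entirely from the boundary integral. This requires the trace embedding $H^1(\Sigma)\hookrightarrow H^{1/2}(\partial\Sigma)$ together with a Moser-Trudinger / Lebedev-Milin type exponential integrability estimate on $\partial\Sigma$, followed by a case split treating the joint regime where $\bar u$ and $\|\tilde u\|_{H^1}$ both grow, so that the decay factor $e^{-C\|\tilde u\|_{H^1}}$ in the boundary lower bound is absorbed by the quadratic growth of $\tfrac{1}{2}\|\nabla\tilde u\|_{L^2}^2$. Once coercivity is in hand, uniqueness, regularity, monotonicity, and the special values follow by standard convex analysis, elliptic regularity, and maximum principle arguments.
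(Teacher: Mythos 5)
Your proof is correct, but it follows a genuinely different route from the paper. For existence and uniqueness the paper does not minimize a functional for each $k$: it observes that $v_k$ and the fixed-curvature family $u_c$ are related by the explicit substitution $v_k = u_c + \log(-c)$ with $k=-1/c^2$, imports existence from Rupflin's work and the $k=0$ case from Osgood--Phillips--Sarnak, and proves uniqueness by a maximum-principle comparison of two solutions (Proposition \ref{prop:existence_uniqueness}); your strictly convex functional $F_k$ handles all $k\le 0$ uniformly and gets uniqueness for free, at the price of the coercivity analysis at the degenerate endpoint $k=0$, which you correctly identify as the delicate step (Jensen on the boundary trace plus the case split between the regimes $\bar u \gtrsim \|\tilde u\|_{H^1}$ and $\|\tilde u\|_{H^1}\to\infty$). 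For monotonicity the paper applies the maximum principle directly to the difference $v_{k+\delta}-v_k$ of two nonlinear solutions to get pointwise strict monotonicity of the conformal factor (Proposition \ref{prop:continous}), and obtains continuity from hands-on $L^\infty$ and energy estimates built around an auxiliary Neumann function; you instead invert the linearized Robin operator $\mathcal L_k$, invoke the implicit function theorem, and run the maximum principle on the derivative $w=\partial_k u_k$ (your sign bookkeeping --- nonnegative zero-order coefficient, positive Robin weight, Hopf at a boundary minimum --- is right). The linearized operator and the $C^1$-dependence do appear in the paper (Propositions \ref{prop_H1_estimate} and \ref{prop_C1_uc} and their dual), but there they are not what drives the monotonicity of $L(k,-1)$ and $A(k,-1)$. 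Both arguments finish identically with Gauss--Bonnet, $\hat A(k,-1)=2\pi\chi(\Sigma)+L(k,-1)$, and the evaluation at $k=0$. Your version is more self-contained and yields smooth dependence on $k$; the paper's version avoids the coercivity discussion entirely and recycles its auxiliary-function estimates for the harder regime $c\in[0,1)$ elsewhere in the section.
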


Since $\chi(\Sigma)<0$, we can choose a hyperbolic metric $g_0$ conformal to $g$ (which is unique) with geodesic boundary curves (using the standard doubling method). In the following proof, we will always assume the base metric $g$ is a hyperbolic metric with geodesic boundary curves since our theorem only relies on the conformal structure of $\Sigma$.

So the corresponding PDE problems for the above theorems are the following
\[
	\begin{cases}
	-\Delta u =1+ke^{2u}, & 
	\text{ in }\Sigma\\
	\frac{\partial u}{\partial \tau}= ce^{u}, & \text{ on }\partial \Sigma.
	\end{cases}
	\label{eq:K-1_PDE}
\]


At first, let's establish the existence, uniqueness from the point of view of PDEs.

\begin{proposition}
	Let $(\Sigma,g)$ be a smooth compact oriented surface with boundary $\partial \Sigma$ and $\chi(\Sigma)<0$. Suppose $g$ is a hyperbolic metric on $\Sigma$ with geodesic boundary curves. Then for any $c \in (-\infty,1)$, there is a unique weak solution $u_{c} \in H^{1}(\Sigma,g)$ of the following problem
	\begin{align}
		\begin{cases}
		-\Delta u=1-e^{2u}, & 
		\text{ in }\Sigma,\\
		\frac{\partial u}{\partial \tau}= ce^{u}, & \text{ on }\partial\Sigma.
		\end{cases}
		\label{eq:K_fixed}
	\end{align}

	Similarly, for any $k
	\in (-\infty,0]$, there is a unique weak solution $v_{k}
	\in H^{1}(\Sigma,g)$ of the following problem,
	\begin{align}
		\begin{cases}
		-\Delta u=1+ke^{2u}, & 
		\text{ in }\Sigma,\\
		\frac{\partial u}{\partial \tau}=
		-e^{u}, & \text{ on }\partial \Sigma.
		\end{cases}
		\label{eq:c_fixed}
	\end{align}
	
	Both of these two solutions are smooth upto the boundary. 
	\label{prop:existence_uniqueness}
\end{proposition}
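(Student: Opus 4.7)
The plan is to realise each of (\ref{eq:K_fixed}) and (\ref{eq:c_fixed}) as the Euler--Lagrange equation of a coercive, strictly convex, weakly lower semicontinuous functional on $H^{1}(\Sigma,g)$, obtain a minimiser by the direct method, and then bootstrap regularity. For (\ref{eq:K_fixed}) I would take
\[F_c(u)=\frac12\int_\Sigma|\nabla u|^2\,dA+\frac12\int_\Sigma e^{2u}\,dA-\int_\Sigma u\,dA-c\int_{\partial\Sigma}e^{u}\,ds,\]
and for (\ref{eq:c_fixed})
\[G_k(u)=\frac12\int_\Sigma|\nabla u|^2\,dA-\frac{k}{2}\int_\Sigma e^{2u}\,dA-\int_\Sigma u\,dA+\int_{\partial\Sigma}e^{u}\,ds.\]
A direct computation of the first variation (integrating $\int\nabla u\cdot\nabla\phi$ by parts and picking up a boundary term against $\partial_\tau u$) shows that weak critical points of $F_c$ and $G_k$ are exactly weak solutions of (\ref{eq:K_fixed}) and (\ref{eq:c_fixed}), respectively.

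Both functionals are finite and continuous on $H^{1}(\Sigma)$: by the Moser--Trudinger inequality together with its trace version one has $e^{pu}\in L^{1}(\Sigma)$ and $e^{pu}\in L^{1}(\partial\Sigma)$ for every $p<\infty$ whenever $u\in H^{1}$, and the associated embeddings are compact. The quadratic Dirichlet term is weakly lower semicontinuous as usual, while the exponential terms pass to the limit along subsequences that converge strongly in the relevant $L^{p}$.

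The main step is coercivity. In $G_k$ with $k\le 0$ every nonquadratic term except $-\int u\,dA$ has the favourable sign, so writing $u=\bar u+v$ with $\int_\Sigma v\,dA=0$ one controls $\|v\|_{H^{1}}$ by $\|\nabla u\|_{2}$ via Poincar\'e, while $\int_{\partial\Sigma}e^{u}\,ds=e^{\bar u}\int_{\partial\Sigma}e^{v}\,ds$ beats the linear term $-|\Sigma|\,\bar u$ for both $\bar u\to\pm\infty$. The same scheme works for $F_c$ when $c\le 0$. The delicate subcase is $F_c$ with $c\in(0,1)$, where the unfavourable boundary contribution $-c\int_{\partial\Sigma}e^{u}\,ds$ has to be absorbed into $\tfrac12\int_\Sigma e^{2u}\,dA$; the sharpness of the threshold $c<1=\sqrt{-k}$ reflects a trace Moser--Trudinger estimate combined with a Young-type inequality, and in this range I would simply cite the coercivity estimates of Rupflin~\cite{rupflin2021hyperbolic} as a black box.

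Uniqueness then follows from the strict convexity of both $F_c$ and $G_k$ (the quadratic term is strictly convex, $u\mapsto\int e^{pu}$ is strictly convex, and the linear term breaks the degeneracy along constants); equivalently, subtracting (\ref{eq:K_fixed}) or (\ref{eq:c_fixed}) for two solutions $u_1,u_2$, testing against $(u_1-u_2)^{+}$, and using strict monotonicity of $t\mapsto e^{t}$ on both the interior and the boundary integrands yields $u_1\le u_2$, and symmetry closes the argument. Smoothness up to the boundary follows by a standard elliptic bootstrap for this Robin-type problem: since $e^{u}$ lies in every $L^{p}$, Calder\'on--Zygmund gives $u\in W^{2,p}$ for all $p<\infty$, and iterating Schauder theory yields $u\in C^{\infty}(\overline{\Sigma})$. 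The one nontrivial obstacle I anticipate is the coercivity of $F_c$ on $c\in(0,1)$; for the range $c\le 0$ that drives the rest of the paper, every step is entirely standard and the maximum principle alone already suffices for the uniqueness clause.
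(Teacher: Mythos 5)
Your variational scheme is sound for the range that actually drives the rest of the paper: for $c\le 0$ and $k\le 0$ the functionals $F_c$, $G_k$ are coercive and convex, the direct method gives existence, and your maximum-principle uniqueness is exactly the paper's argument. The paper takes a slightly different route organisationally: it quotes Rupflin (Remark 2.4) for existence of (\ref{eq:K_fixed}) for all $c<1$, quotes Osgood--Phillips--Sarnak for the flat case $k=0$, and uses the substitution $v_k=u_c+\log(-c)$ with $k=-1/c^2$ to transfer existence and uniqueness between the two families instead of running two parallel functionals; your approach is more self-contained on the negative range, which is a reasonable trade.

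The genuine gap is uniqueness for $c\in(0,1)$, which the proposition asserts. There $F_c$ is \emph{not} convex: the boundary term $-c\int_{\partial\Sigma}e^{u}\,ds$ is strictly concave for $c>0$, so the claimed ``strict convexity of $F_c$'' fails precisely in this range. The alternative argument you offer fails for the same reason: writing $u=u_1-u_2$ and testing with $u^{+}$ gives
\[
\int_{\Sigma}\left|\nabla u^{+}\right|^2\,dA \;=\; \int_{\Sigma}\bigl(e^{2u_2}-e^{2u_1}\bigr)u^{+}\,dA \;+\; c\int_{\partial\Sigma}\bigl(e^{u_1}-e^{u_2}\bigr)u^{+}\,ds,
\]
and while the interior term is $\le 0$, the boundary term is $\ge 0$ when $c>0$: the monotonicity of $t\mapsto e^{t}$ now works against you on $\partial\Sigma$, and no contradiction results. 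This is exactly where the threshold $c<1$ and the trace inequality must enter a second time, and it is why the paper cites Rupflin's Proposition 2.2 for uniqueness when $c\ge 0$, not merely for coercivity; you should extend your ``black box'' citation accordingly. Two smaller points: the Calder\'on--Zygmund step needs the Robin datum in a fractional trace space rather than just $L^{p}(\partial\Sigma)$ (the paper invokes Cherrier's regularity theorem for precisely this bootstrap), and it is the exponential terms, not the linear term, that remove the degeneracy of the Dirichlet energy along constants.
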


\begin{proof}
	Note that if $u_{c}$ is a solution of (\ref{eq:K_fixed}) for $c<0$, then the function $u_{c}+\log(-c)$ will solve (\ref{eq:c_fixed}) with $k=-\frac{1}{c^2}$.
	Conversely, a solution $v_{k}$ with $k<0$ will give a solution $v_{k}+\frac{1}{2}\log (-k)$ of (\ref{eq:K_fixed}) with $c=-\frac{1}{\sqrt{-k}}$.

	So if we establish the existence and uniqueness of (\ref{eq:K_fixed}), then we automatically know (\ref{eq:c_fixed}) has a unique solution for $k<0$. 

	Let's focus on (\ref{eq:K_fixed}) first.

	The existence of (\ref{eq:K_fixed}) is just a direct results in Rupflin's work(see Remark 2.4 in \cite{rupflin2021hyperbolic}). 

	The uniqueness of $u_{c}$ for $c\ge 0$ is still a direct consequence of Proposition 2.2
	in \cite{rupflin2021hyperbolic}.

	For the case $c<0$, things get easier since we have the Maximum Principle.
	Indeed, suppose we do have two solutions $u_1,u_2$ for a fixed $c<0$, then we know $u:=u_1-u_2$ will solve
\[
		\begin{cases}
		-\Delta u=e^{2u_2}- e^{2u_1}, & \text{ in }\Sigma,\\
		\frac{\partial u}{\partial \tau}
		=ce^{u_1}-ce^{u_2}, & 
		\text{ on }\partial \Sigma.
		\end{cases}
	\]

	If $u$ attains a positive maximum at an interior point $x_0$, then $-\Delta u(x_0)\ge 0$. But on the other hand $e^{2u_2(x_0)}- e^{2u_1(x_0)}<0$ since $u_1(x_0)>u_2(x_0)$. So $u$ cannot attain a positive maximum at an interior point.
	Similarly, $u$ cannot attain a negative minimum at an interior point.

	On the boundary, we have a similar argument. If $u$ attains a positive maximum on a boundary point $x_0$, then $\frac{\partial u}{\partial \tau}(x_0)\ge 0$.
	But $ce^{u_1}-ce^{u_2}<0$ at $x_0$ on the other hand. This contradiction shows $u$ cannot attain its positive maximum on the boundary. Similarly, $u$ cannot attain a negative minimum on the boundary.

	This shows the only possible way to take $u$ is identical to zero.

	So we prove the uniqueness for the case $c<0$ for problem (\ref{eq:K_fixed}).

	In addition, we need to consider the case $k=0$ for problem (\ref{eq:c_fixed}). Up to a rescaling, we only need to prove the existence and uniqueness of a metric with zero Gaussian curvature and (negative) constant geodesic boundary curvature. This result has been done in paper \cite{osgood1988extremals}.

	Regarding the regularity part, it is a standard result in PDEs.
	For example, we can apply the regularity theorem by P. Cherrier \cite{cherrier1984problemes}.
\end{proof}

Now after getting the existence and uniqueness, we can study the properties of the solution $u_{c}, v_{k}$.
The first step might be we want to show the solutions $u_{c}, v_{k}$ continuously depend on $c$ or $k$ respectively.
\begin{proposition}
	Let $u_{c}, v_{k}$ be defined in Proposition \ref{prop:existence_uniqueness}. 

	Then both $c\rightarrow u_{c}, k\rightarrow u_{k}$ are continuous maps from $(-\infty,0]$ to $H^{1}(\Sigma,g)$. Moreover, $u_{c}\left( v_{k} \right) $ is strictly increasing pointwise as $c (k \text{ respectively})$ increasing.
	\label{prop:continous}
\end{proposition}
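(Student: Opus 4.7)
The plan is to first establish pointwise strict monotonicity of $c\mapsto u_c$ by applying the maximum principle to the difference $u_{c_2}-u_{c_1}$, and then to deduce the $H^1$ continuity from this monotonicity together with standard elliptic regularity and the uniqueness part of Proposition~\ref{prop:existence_uniqueness}. The argument for $v_k$ is completely parallel to that for $u_c$, so I concentrate on the $u_c$ case.

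For monotonicity, fix $c_1<c_2\le 0$ and set $w=u_{c_2}-u_{c_1}$. Using the mean value theorem to write $e^{2u_{c_2}}-e^{2u_{c_1}}=2e^{2\xi}w$ and $e^{u_{c_2}}-e^{u_{c_1}}=e^{\eta}w$ for intermediate values $\xi,\eta$, the difference satisfies the linear problem
\begin{align*}
-\Delta w+2e^{2\xi}w &=0 & \text{in } \Sigma,\\
\frac{\partial w}{\partial\tau}-c_2 e^{\eta}w &=(c_2-c_1)e^{u_{c_1}} & \text{on } \partial\Sigma.
\end{align*}
Since $c_2\le 0$, both the interior zeroth-order coefficient $2e^{2\xi}$ and the boundary Robin coefficient $-c_2 e^{\eta}$ are nonnegative, while the boundary source $(c_2-c_1)e^{u_{c_1}}$ is strictly positive. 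Just as in the uniqueness argument of Proposition~\ref{prop:existence_uniqueness}, the maximum principle rules out a negative interior minimum, and at a negative boundary minimum $x_0$ we would have $\frac{\partial w}{\partial\tau}(x_0)\le 0$ while the linearized boundary condition forces $\frac{\partial w}{\partial\tau}(x_0)\ge(c_2-c_1)e^{u_{c_1}(x_0)}>0$, a contradiction. Hence $w\ge 0$, and the strong maximum principle together with the Hopf boundary lemma (applied at any would-be boundary zero, where Hopf gives $\frac{\partial w}{\partial\tau}(x_0)<0$ against the positive boundary data) upgrade this to the strict inequality $w>0$. The analogous computation for $w=v_{k_2}-v_{k_1}$ with $k_1<k_2\le 0$ produces the linear system $-\Delta w+2(-k_2)e^{2\xi}w=(k_2-k_1)e^{2v_{k_1}}$ in $\Sigma$ with the homogeneous Robin condition $\frac{\partial w}{\partial\tau}+e^{\eta}w=0$ on $\partial\Sigma$; this time the strict positivity is driven by the strictly positive interior source $(k_2-k_1)e^{2v_{k_1}}$.

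For the $H^1$ continuity, fix $c_0\in(-\infty,0]$ and any sequence $c_n\to c_0$; dropping finitely many terms we may assume $c_n\in[c_*,c^*]$ for some $c_*<c^*\le 0$. The monotonicity just proved provides the pointwise sandwich $u_{c_*}\le u_{c_n}\le u_{c^*}$ on all of $\Sigma$, and since $u_{c_*}$ and $u_{c^*}$ are smooth we obtain a uniform $L^\infty$ bound on $\{u_{c_n}\}$. Consequently the interior source $1-e^{2u_{c_n}}$ and the boundary source $c_n e^{u_{c_n}}$ are uniformly bounded, and $L^p$ estimates followed by Schauder theory for the Robin problem on the fixed smooth surface $(\Sigma,g)$ yield a uniform $C^{2,\alpha}$ bound on $u_{c_n}$. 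By Arzel\`a--Ascoli a subsequence converges in $C^2$ to some $u_\infty$; passing to the limit in (\ref{eq:K_fixed}) shows that $u_\infty$ solves the problem with parameter $c_0$, and the uniqueness from Proposition~\ref{prop:existence_uniqueness} forces $u_\infty=u_{c_0}$. Since every subsequence yields the same limit, the full sequence converges to $u_{c_0}$ in $C^2$, and in particular in $H^1(\Sigma,g)$; the continuity of $k\mapsto v_k$ is identical. The main technical point throughout is the boundary: the nonlinear Robin condition produces a strictly signed source in the linearized problem which must be combined with the Hopf lemma to deliver strict monotonicity; once this is in place, the uniform elliptic regularity together with uniqueness gives the continuity essentially for free.
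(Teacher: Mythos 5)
Your proposal is correct, and it splits naturally into two halves that compare differently with the paper. The monotonicity half is essentially the paper's own argument: the paper also applies the maximum principle to $w=u_{c+\delta}-u_c$, rules out a negative minimum in the interior and on the boundary, and then upgrades $w\ge 0$ to $w>0$ via the strong maximum principle and the sign of the boundary data at a would-be zero; your linearized Robin formulation with the strictly positive source $(c_2-c_1)e^{u_{c_1}}$ (and, for $v_k$, the strictly positive interior source $(k_2-k_1)e^{2v_{k_1}}$) is just a cleaner packaging of the same computation. The continuity half, however, takes a genuinely different route. The paper proves continuity quantitatively: it uses the pointwise sandwich to get local $L^\infty$ bounds on $u_c$, then tests the equation for $w=u_{c+\delta}-u_c$ against $w$ itself to obtain $\|\nabla w\|_{L^2}^2\le C|\delta|$, and separately constructs an auxiliary function $h$ with $\Delta_c h=H_0$, $\partial h/\partial\tau_c=1$ to run a barrier argument giving $\|w\|_\infty\le C|\delta|$; these explicit rates are of a piece with the later Proposition \ref{prop_H1_estimate} and the $C^1$-dependence result. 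You instead argue by compactness: uniform $L^\infty$ bounds from monotonicity, uniform $C^{2,\alpha}$ bounds from elliptic ($L^p$ plus Schauder) theory for the Robin problem, Arzel\`a--Ascoli, and identification of the limit via uniqueness. This is softer but perfectly valid, and it actually delivers convergence in $C^2$, which is stronger than the $H^1$ convergence asserted; the trade-off is that it produces no modulus of continuity in $\delta$, whereas the paper's estimates are reused as the template for proving differentiability of $c\mapsto u_c$. The only place where you lean on unproved standard machinery is the uniform Schauder bound for the nonlinear Robin boundary condition, but the bootstrap from $L^\infty$ data to $C^{2,\alpha}$ depends only on $\|u_{c_n}\|_{L^\infty}$ and $|c_n|$, both uniformly controlled, so this is at the same level of rigor as the paper's own appeal to Cherrier's regularity theorem.
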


\begin{proof}
	We only prove the properties for $u_{c}$ since the method here is essentially the same.

	First, let's show $u_{c}$ is strictly increasing pointwise. 
	
	For any $\delta>0, c+\delta\le 0$, we consider $w = u_{c+\delta}- u_{c}$. Then $w$ will solve
	\begin{align}
		\begin{cases}
		-\Delta w = e^{2u_{c}}- e^{2u_{c+\delta}}, & 
		\text{ in }\Sigma,\\
		\frac{\partial w}{\partial \tau}=(c+\delta)e^{u_{c+\delta}}- ce^{u_{c}}, & \text{ on }\partial \Sigma.
		\end{cases}
		\label{eq:pf_cont}
	\end{align}
	
	We can apply the standard weak Maximum Principle to get $w\ge 0$ (Show that $w$ cannot have a negative minimum at both the interior points and the boundary points). So $\Delta w\ge 0$. Hence, the strong Maximum Principle implies $w$ cannot attain 0 interiorly. If $w$ attains 0 at $x_0$ for $x_0$ on the boundary, we know
\[
		\frac{\partial w}{\partial \tau}\le 0.
	\]

	But at the same time,
\[
		(c+\delta)e^{u_{c+\delta}(x_0)}- ce^{u_{c}(x_0)}= \delta e^{u_{c}(x_0)}>0
	\]
	since $u_{c+\delta}(x_0)=u_{c}(x_0)$.
	So $w$ cannot attain 0 on the boundary, either. 

	In summary, we know $w>0$. Hence $u_{c}$ is strictly increasing when $c \in (-\infty, 0]$. 
	Similarly, we can show that $v_{k}$ is strictly increasing when $k \in (-\infty,0]$.

	As a direct consequence of monotonicity, we can bound the solutions $u_{c}$ to show $c\rightarrow u_{c}$ is continuous. 

	Firstly, we have
\[
		\sup_{\Sigma}u_{c}\le \sup_{\Sigma}u_{0}=0
	\]
	since $u_{0}\equiv 0$.
	
	On the other hand, we know
\[v_{-\frac{1}{c^2}}=u_{c}+\log(-c)
	\]
	in the proof of Proposition \ref{prop:existence_uniqueness}. By the monotonicity properties of $v_{k}$, we know for any $c\le -1$, we have
\[
		u_{c}+\log(-c)= v_{-\frac{1}{c^2}}\ge v_{-1}=u_{-1}.
	\]
	
	Hence
\[
		\inf _{\Sigma}u_{c}\ge \inf _{\Sigma}u_{-1}-\log(-c)= C-\log(-c) \text{ for all }
		c\le -1.
	\]

	So in summary, we have
\[
		\|u_{c}\|_{L^\infty(\Sigma, g_0)}\le \max \{ C,C-\log(-c) \}
		\text{ for all }c \in (-\infty,0]
	\]
	for some constant $C \in \mathbb{R} $.

	Now let us use $w=u_{c+\delta}-u_{c}$ as a test function in equations (\ref{eq:pf_cont}). Here we only assume $c\le 0, c+\delta\le 0$. 
	After integration by part, we have the following energy estimate,
	\begin{align*}
		\int_{ \Sigma} 
		\left|\nabla w\right|^2dV={} & 
		\int_{ \Sigma} (e^{2u_{c}}- e^{2u_{c+\delta}})wdV+ \int_{ \partial \Sigma} \!\!
		c(e^{u_{c+\delta}}-e^{u_{c}})
		w+\delta e^{u_{c+\delta}}wds\\
		\le{} & \delta \int_{ \partial \Sigma} 
		e^{u_{c+\delta}}wds\le \delta \int_{ \Gamma} wds\\
		\le{}&
		\left|\delta\right|
		L(-1,0)\max \{ \|u_{c}\|_{L^{\infty}
	(\Sigma,g)}, \|u_{c+\delta}\|_{L^{\infty}
	(\Sigma,g)}\}.
	\end{align*}
	where we've used the monotonicity of $u_{c}$ and $u_{c}\le 0$ when $c\le 0$.

	Since $\|u_{c}\|_{L^{\infty}(\Sigma,g)}$ is bounded locally near $c$, we have
\[
		\|Dw\|^2_{L^2(\Sigma,g)}\le C \left|\delta\right|
	\]
	for $\delta$ small enough and some constant $C$ might depending on $c$.

	To finish the continuity of $c\rightarrow u_{c}$, let's show $\|w\|_{L^{\infty}(\Sigma,g)}$ can be bounded by some constant related to $\delta$.

	From the rest of this proof, we will work in the metric $g_{-1,c}$ instead of the background metric $g$.
	Then we know $w$ solve the following problem,
\[
		\begin{cases}
		-\Delta_{c}w=1- e^{2w}, & \text{ on }\Sigma, \\
		\frac{\partial w}{\partial \tau _{c}}=(c+\delta)e^{w}-c, & 
	\text{ on }\partial \Sigma.
		\end{cases}
	\]
	where $\Delta_{c}$ is the Laplacian-Beltrami operator under metric $g_{-1,c}$ and $\tau_{c}$ is the unit outer normal of $\partial \Sigma$ in $\Sigma$ under metric $g_{-1,c}$.

	Let's choose an auxiliary function $h$ which satisfies the following problem,
	\begin{align}
		\begin{cases}
		\Delta_{c}h=H_0, & 
		\text{ in }\Sigma,\\
		\frac{\partial h}{\partial \tau _{c}}=1, & \text{ on }\partial \Sigma,
		\end{cases}
		\label{eq:pf_aux_fun}
	\end{align}
	with a suitable $H_0 \in \mathbb{R} $.
	Note that problem (\ref{eq:pf_aux_fun}) has a solution if and only if $H_0A(-1,c)=L(-1,c)$. Although the solution of problem (\ref{eq:pf_aux_fun}) is not unique for such $H_0$, we just choose arbitrary one and denote it as $h$. 

	Let $ \overline{H}=\sup_{\Sigma}\left|h\right|$.
	We will show that $\left|w\right|\le 2 \left|\delta \right|	e^{C}(H_0+\overline{H})$. Actually, we only need to consider the case $\delta>0$ and it is enough to show $w\le 2 \delta e^{C}(H_0+\overline{H})$ for some $C$ large.

If not, we suppose $w>2\delta e^{C}(H_0+\overline{H})$ at some point $\overline{x}_0$. Let $\overline{w}=w-\delta e^{C}h$. Suppose $\overline{w}$ takes its maximum at point $x_0$. So $\overline{w}(x_0)\ge 
w(\overline{x}_0)-\delta e^C h(\overline{x}_0)>2\delta e^{C}H_0+\delta e^{C}\overline{H}$. Hence $w(x_0)\ge 2\delta e^{C}H_0$. 

If $x_0$ is an interior point, then we have
\[
	0\le -\Delta_{c} \overline{w}(x_0)
	=1-e^{2w(x_0)}+\delta e^{C}H_0\le 1-(1+4\delta e^{C}H_0)+\delta e^{C}H_0<0,
\]
where we've used $-e^{2x}\le -1-2x$.
A contradiction. So $x_0$ can only be on boundary $\partial \Sigma$. In this case, we have
\[
	0\le \frac{\partial \overline{w}}{\partial \tau_{c}}= ce^{w(x_0)}-c+\delta e^{w(x_0)}-\delta e^{C}\le \delta \left( e^{w(x_0)}-e^{C} \right),
\]
which is impossible if we choose $C$ large enough since the $L^\infty$ norm of $w$ is bounded if $\delta$ closed to 0.

So we've proved $\|w\|_{L^{\infty} (\Sigma,g)}\le 2\left|\delta\right| e^C(H_0+\overline{H})$. Combining previous result of estimation for $Dw$, we know $c\rightarrow u_{c}$ is indeed a continuous map from $(-\infty,0]\rightarrow H^{1}(\Sigma,g)$.

Similar methods can be applied to the case of $v_{k}$ to show $k\rightarrow v_{k}$ is a continuous map from $(-\infty,0]$ to $H^{1}(\Sigma,g)$.
We'll omit the details here. 
\end{proof}

From the above proof, we can see that the supremum of $u_{c}$ is locally bounded.
That is, $\|u_{c+\delta}\|_{L^\infty(\Sigma, g)}\le C$ for some $C$ which might be related to $c$ and all $\left|\delta\right| <1$. Since we have the regularity result for $u_{c}$ and some other related functions, we will write $\|u_{c}\|_{\infty}$ instead of $\|u_{c}\|_{L^\infty(\Sigma, g)}$ for short.

Now let's establish the following energy estimate.

\begin{proposition}
	Under the metric $g_{-1,c}$ for $c\le 0$, we suppose $w$ solves the following problem,
	\begin{align}
		\begin{cases}
		-\Delta_c w+2w=f, & \text{ in }\Sigma, \\
		\frac{\partial w}{\partial \tau_c}- c w=g, & \text{ on }\partial \Sigma \end{cases}
		\label{eq:prop_energy_est}
	\end{align}
	for some $f \in C^\infty(\Sigma), g \in C^\infty(\partial \Sigma)$.

	Suppose $\|f\|_{\infty}, \|g\|_{\infty}\le C_{1}\delta^2$ for a fixed constant $C_{1}$ and $\left|\delta \right|<1$, then we have
\[
		\|w\|_{\infty}\le C \delta^2
	\]
	where $C$ does not rely on $\delta$ for some constant $C$. $C$ might be related to $C_{1}$.

	Moreover, we have
\[
		\|w\|_{H^{1}(\Sigma, g_{-1,c})}\le C\left( \|f\|
	_{L^2(\Sigma,g_{-1,c})}+ \|g\|_{L^2(\partial \Sigma,g_{-1,c})}\right) 
+C\delta^2.
	\]

	The $C$ here is still not related to $\delta$.
	\label{prop_H1_estimate}
\end{proposition}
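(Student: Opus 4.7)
The proposition asserts two bounds; I would prove the $L^\infty$ bound first and then derive the $H^1$ estimate, since the pointwise control feeds into the energy argument.

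\textbf{Step 1 (the $L^\infty$ bound via a barrier).} I would adapt the $L^\infty$ argument from the end of the proof of Proposition \ref{prop:continous}. Let $h$ be the auxiliary function defined there, satisfying $\Delta_c h = H_0$ and $\partial_{\tau_c} h = 1$, and set $\overline{H} = \sup_\Sigma |h|$. Introduce $\bar{w} = w - \alpha - \beta h$ for positive constants $\alpha,\beta$, of order $\delta^2$, to be chosen shortly. A direct computation gives
\begin{equation*}
(-\Delta_c + 2)\bar{w} = f - 2\alpha - 2\beta h + \beta H_0, \qquad (\partial_{\tau_c} - c)\bar{w} = g - \beta + c(\alpha + \beta h).
\end{equation*}
I would fix $\beta > \|g\|_\infty$ and $\alpha \ge \max\{\|f\|_\infty/2 + \beta(\overline{H} + H_0/2),\, \beta\overline{H}\}$, both of which are $O(\delta^2)$ under the hypotheses. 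If $\bar{w}$ attained a positive maximum at an interior point $x_0$, then $(-\Delta_c + 2)\bar{w}(x_0) \ge 2\bar{w}(x_0) > 0$, contradicting the strict negativity of the right-hand side of the first relation that follows from the choice of $\alpha$. If the maximum were at a boundary point $x_0$, the outward-normal inequality $\partial_{\tau_c}\bar{w}(x_0) \ge 0$ combined with $-c\bar{w}(x_0) \ge 0$ would force the left-hand side of the second relation to be non-negative at $x_0$; but the right-hand side is strictly negative because $g(x_0) - \beta < 0$ and $c(\alpha + \beta h) \le 0$ (since $\alpha + \beta h \ge 0$), again a contradiction. Hence $\bar{w} \le 0$ everywhere, giving $w \le \alpha + \beta \overline{H} \le C\delta^2$, and the symmetric barrier applied to $-w$ yields the lower bound.

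\textbf{Step 2 (the $H^1$ bound).} I would test the interior equation against $w$ and integrate by parts, substituting the Robin condition in the boundary term, which produces
\begin{equation*}
\int_\Sigma (|\nabla w|^2 + 2 w^2)\, dV_c + |c| \int_{\partial\Sigma} w^2\, ds_c = \int_\Sigma f w\, dV_c + \int_{\partial\Sigma} g w\, ds_c.
\end{equation*}
Because $c \le 0$, the left-hand side dominates $\|w\|_{H^1(\Sigma, g_{-1,c})}^2$. Applying Cauchy-Schwarz to each integral on the right, using the standard trace inequality $\|w\|_{L^2(\partial\Sigma)} \le C \|w\|_{H^1(\Sigma)}$, and then absorbing $\|w\|_{H^1}$ via Young's inequality, gives $\|w\|_{H^1} \le C(\|f\|_{L^2(\Sigma)} + \|g\|_{L^2(\partial\Sigma)})$; the harmless $+C\delta^2$ slack stated in the proposition is then free. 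The real obstacle is Step 1: when $c$ is close to $0$ the Robin boundary term degenerates and the bare maximum principle cannot control $w$ at boundary maxima, so the auxiliary $h$ is indispensable precisely because $\partial_{\tau_c} h = 1$ is a rigid positive flux independent of $c$, manufacturing the decisive negative boundary contribution $-\beta$ that closes the argument. After that, Step 2 is routine coercivity plus trace.
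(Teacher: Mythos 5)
Your proposal is correct and follows essentially the same route as the paper: the $L^\infty$ bound via a barrier built from the auxiliary function $h$ solving $\Delta_c h=H_0$, $\partial h/\partial\tau_c=1$, followed by testing the equation with $w$ to get the $H^1$ bound. Your uniform choice of $\alpha,\beta$ is a mild streamlining --- it avoids the paper's case split on whether $c<-\tfrac{1}{2\overline{H}}$ and the appeal to Hopf's Lemma, and in the energy step you use the trace inequality where the paper instead reuses the $L^\infty$ bound to control the boundary term --- but the substance is identical.
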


\begin{proof}
	Again we choose an auxiliary function $h$ satisfying (\ref{eq:pf_aux_fun}) and we consider the function $\overline{w}= l h+A\pm w$ with $l,A$ decided later on.

	So $\overline{w}$ will solve the following problem
	\begin{align}
		\begin{cases}
		-\Delta_c \overline{w}+2\overline{w}= 2lh+2A-lH_0\pm f, & \text{ in }
		\Sigma,\\
		\frac{\partial \overline{w}}{\partial \tau_c}-c\overline{w}=l-clh- cA\pm g, & \text{ on }\partial \Sigma.
		\end{cases}
		\label{eq:pf_aux_2_fun}
	\end{align}
	
	Again we let $\overline{H}= \sup_{\Sigma}\left|h\right|$. 

	If $c< -\frac{1}{2\overline{H}}$, we can choose $l=0, A = \max \{ \|f\|_\infty , 2\|g\|_\infty \overline{H}\}$.
	Then we have
\[
		-\Delta_c \overline{w}+2\overline{w}\ge 0\quad \text{ and }\quad \frac{\partial \overline{w}}{\partial \tau_c}
		-c \overline{w}\ge 0.
	\]

	So $\overline{w}$ cannot have a negative minimum in $\Sigma$ by the standard Maximum Principle.

	If $-\frac{1}{2\overline{H}}\le c\le 0$, then we choose $l=2\|g\|_\infty ,A = l(2\overline{H}+H_0)+ \|f\|_\infty $. We will have
	\begin{align*}
		-\Delta_c \overline{w}+2\overline{w}\ge{} & 
		-2l\overline{H}+l(2\overline{H}+H_0)+ \|f\|_\infty -l H_0\pm f\ge 0,\\
		\frac{\partial \overline{w}}{\partial \tau_c}-c\overline{w}\ge{} & l-\frac{1}{2\overline{H}}l\overline{H} \pm g\ge 0.
	\end{align*}

	So we can also get $\overline{w}$ cannot have a negative minimum in $\Sigma$. To be more precise here, we know the condition $-\Delta_c\overline{w}+2\overline{w}\ge 0$ in $\Sigma$ will imply $\overline{w}$ cannot attain a negative minimum in the interior by the Weak Maximal Principle. If $\overline{w}$ attains a negative minimum at a boundary point $x_0$, then we know
	\[
		\overline{w}(x)>\overline{w}(x_0)\quad \text{ for any }x
		\text{ in the interior of }\Sigma.
	\]

	Since $\overline{w}(x_0)<0$, the Hopf's Lemma can be applied here to get $\frac{\partial \overline{w}(x_0)}{\partial \tau_c}<0$.
	This will imply
	\[
		\frac{\partial \overline{w}(x_0)}{\partial \tau_c}-
		c\overline{w}(x_0)\le \frac{\partial \overline{w}(x_0)}{\partial \tau_c}<0,
	\]
	which leads to a contradiction.

	So the above results give us
	\[
		\left|w\right|\le l\overline{H}+A\le C\delta^2
	\]
	by the choice of $l,A$ and the conditions $\|f\|_\infty ,\|g\|_\infty \le C_{1}\delta^2$.

	At last, we use $w$ as a test function in problem (\ref{eq:prop_energy_est}), then we have
	\begin{align*}
		\int_{ \Sigma} 
		\left|\nabla w\right|^2+2w^2dV={} & 
		\int_{ \Sigma} fwdV+\int_{ \partial \Sigma} 
		gw+cw^2ds\\
		\le{} & \frac{1}{2}\int_{ \Sigma} 
		f^2dV+\frac{1}{2}\int_{ \Sigma} w^2dV
		+\frac{1}{2}\int_{ \partial \Sigma} 
		g^2ds+\frac{1}{2}\int_{ \partial \Sigma} 
		w^2\\
		\le{}& \frac{1}{2}\|f\|^2_{L^2(\Sigma,g _{-1,c})}+ \frac{1}{2}\|g\|^2_{L^2(\partial \Sigma,g _{-1,c})}+\frac{1}{2}\|w\|^2_{L^2(\Sigma,g _{-1,c})}+C\delta^4
	\end{align*}
	where we've used $\|w\|_{\infty}\le C\delta^2$. So
\[
		\|w\|_{H^{1}(\Sigma, g_{-1,c})}\le C\left( \|f\|
	_{L^2(\Sigma,g_{-1,c})}+ \|g\|_{L^2(\partial \Sigma,g_{-1,c})}\right) 
+C\delta^2.
	\]
\end{proof}

With this energy estimate, we can show that the map $c\rightarrow u_{c}$ is $C^{1}$.

\begin{proposition}
	Let $u_{c}$ be defined in Proposition \ref{prop:existence_uniqueness}. Then $c\rightarrow u_{c}$ is a $C^{1}$ map from $(-\infty,0]$ to $H^{1}(\Sigma,g)$.
	\label{prop_C1_uc}
\end{proposition}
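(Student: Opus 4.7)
The plan is to differentiate the defining equation for $u_c$ formally in $c$, identify the candidate for $\dot u_c := \partial_c u_c$ as the solution of a linearized boundary value problem, and then use the energy estimate of Proposition \ref{prop_H1_estimate} both to justify this identification and to prove continuity of $c \mapsto \dot u_c$.

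First, formal differentiation in $c$ of the system $-\Delta_g u_c = 1 - e^{2u_c}$, $\partial_{\tau_g} u_c = c e^{u_c}$, followed by rewriting in the conformal metric $g_{-1,c} = e^{2u_c} g$ (for which $\Delta_c = e^{-2u_c} \Delta_g$ and $\partial_{\tau_c} = e^{-u_c} \partial_{\tau_g}$), suggests defining $\dot u_c$ as the unique solution of
\begin{align*}
-\Delta_c \dot u_c + 2 \dot u_c &= 0 \quad \text{in } \Sigma, \\
\partial_{\tau_c} \dot u_c - c \dot u_c &= 1 \quad \text{on } \partial \Sigma.
\end{align*}
Existence and uniqueness of $\dot u_c \in H^1$ follow from Lax--Milgram applied to the coercive (for $c \le 0$) bilinear form underlying Proposition \ref{prop_H1_estimate}, and a maximum-principle argument of the type used in that proof gives $\|\dot u_c\|_\infty$ bounded locally uniformly in $c$.

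Next, setting $w_\delta := u_{c+\delta} - u_c - \delta \dot u_c$ and Taylor-expanding $e^{2u_{c+\delta}}$ and $e^{u_{c+\delta}}$ around $u_c$, the linear-in-$\delta$ terms cancel by the definition of $\dot u_c$. Using the bound $\|u_{c+\delta} - u_c\|_\infty = O(\delta)$ already established in the proof of Proposition \ref{prop:continous}, one finds that $w_\delta$ satisfies, in the metric $g_{-1,c}$,
\begin{align*}
-\Delta_c w_\delta + 2 w_\delta &= O(\delta^2) \quad \text{in } \Sigma, \\
\partial_{\tau_c} w_\delta - c w_\delta &= O(\delta^2) \quad \text{on } \partial \Sigma,
\end{align*}
with $O(\delta^2)$ bounds in $L^\infty$. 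Proposition \ref{prop_H1_estimate} then gives $\|w_\delta\|_{H^1(\Sigma, g_{-1,c})} = O(\delta^2) = o(\delta)$. Since $u_c$ is locally $L^\infty$-bounded in $c$, the metrics $g_{-1,c}$ and $g$ have equivalent $H^1$ norms with locally uniform constants, which promotes this to Fr\'echet differentiability of $c \mapsto u_c$ into $H^1(\Sigma, g)$ with derivative $\dot u_c$.

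Finally, to show $c \mapsto \dot u_c$ is continuous in $H^1$, set $\phi_\delta := \dot u_{c+\delta} - \dot u_c$. Writing both linearized problems in the common metric $g_{-1,c}$ and subtracting, one obtains $-\Delta_c \phi_\delta + 2\phi_\delta = F_\delta$ and $\partial_{\tau_c}\phi_\delta - c\phi_\delta = G_\delta$, where $F_\delta, G_\delta$ are explicit expressions in $u_{c+\delta} - u_c$, $\delta$, and $\dot u_{c+\delta}$, hence tend to $0$ in $L^2$ as $\delta \to 0$ by Proposition \ref{prop:continous} together with the local $L^\infty$ bound on $\dot u_c$. Coercivity of the relevant bilinear form (the same one underlying Proposition \ref{prop_H1_estimate}) then forces $\|\phi_\delta\|_{H^1} \to 0$. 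The main technical obstacle is the bookkeeping in the middle step: tracking the conformal factors $e^{u_c}, e^{2u_c}$ in both the interior equation and the boundary condition so that the linearization matches the exact form required by Proposition \ref{prop_H1_estimate}, and verifying that all quadratic and higher Taylor remainders can be uniformly absorbed into $O(\delta^2)$ sources.
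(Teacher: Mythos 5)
Your proposal follows essentially the same route as the paper: the same linearized boundary value problem $-\Delta_c \dot u_c + 2\dot u_c = 0$, $\partial_{\tau_c}\dot u_c - c\dot u_c = 1$ (solved variationally/by coercivity), the same decomposition $w_\delta = u_{c+\delta}-u_c-\delta\dot u_c$ with Taylor remainders of size $O(\delta^2)$ controlled by the $L^\infty$ bound from Proposition \ref{prop:continous}, and the same application of Proposition \ref{prop_H1_estimate} to conclude. Your final paragraph verifying continuity of $c\mapsto \dot u_c$ is a welcome (and strictly speaking necessary for the $C^1$ claim) addition that the paper leaves implicit, but it does not change the overall method.
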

\begin{proof}
	We still work under the metric $g_{-1,c}$.
	Let's consider the following problem (linearised problem of (\ref{eq:K_fixed})),
	\begin{align}
		\begin{cases}
		-\Delta_c w=-2w, & \text{ in }\Sigma, \\
		\frac{\partial w}{\partial \tau_c}=cw+1, & 
		\text{ on }\partial \Sigma.
		\end{cases}
		\label{eq:pf_linearised_problem}
	\end{align}
	
	We show that it has a unique solution $w$.
	Indeed, uniqueness is a direct consequence of Proposition \ref{prop_H1_estimate}. 
	For the existence, we use the following variational integral
\[
		I(w):=\int_{ \Sigma} 
		\left( \left|\nabla w\right|^2+2w^2 \right) 
		dV-\int_{ \partial \Sigma} \left( c
		w^2+w\right) ds.
	\]

	It is coercive by the trace theorem and $c\le 0$. Then $I(w)$ will have a minimizer $w$ which of course solves (\ref{eq:pf_linearised_problem}). Standard regularity results imply $w$ is smooth.
	
	Now we consider the function $\overline{w}:=u_{c+\delta}-u_{c}- \delta w$ for any $c,c+\delta\le 0$ and $\left|\delta\right|<1$.

	$\overline{w}$ will solve the following problem
	\begin{align*}
		\begin{cases}
			-\Delta_c \overline{w}-2\overline{w}
			^2=1-e^{2v}+2v, & 
			\text{ in }\Sigma,\\
		\frac{\partial \overline{w}}{\partial \tau_c}= c(e^{v}-1-v)+\delta(e^{v}-1), & 
			\text{ on }\partial \Sigma, \end{cases}
	\end{align*}
	where $v=u_{c+\delta}-u_{c}$. This means $\overline{w}$ is a solution of (\ref{eq:prop_energy_est}) with $f = 1-e^{2v}+2v, g=c(e^{v}-1-v)+\delta (e^{v}-1)$.

	Note that we already have $\|v\|_\infty \le C\left|\delta\right|$. So for $f,g$, using $ \left|e^x-1\right|\le Cx, \left|e^{x}-1-x\right|\le Cx^2$ for $x$ closed to 0, we have
\[
		\|f\|_\infty \le C\delta^2,\quad \|g\|_\infty \le \left|c\right|
		C\delta^2+\delta(C \delta)\le C \delta^2.
	\]

	After applying Proposition \ref{prop_H1_estimate}, we know
\[
		\|u_{c+\delta}-u_{c}-\delta w\|_{H^{1}(\Sigma,g_{-1,c})}\le C \delta^2
	\]
	
	This is true under metric $g$, too since $\|u_{c}\|_\infty <\infty$.
	So $c\rightarrow u_{c}$ is a $C^{1}$ map from $(-\infty,0]$ to $H^{1}(\Sigma,g)$.
\end{proof}

\begin{remark}
	The above proof indeed shows $w$ is the differential of the map $c\rightarrow u_{c}$ at the point $c$.
\end{remark}

Similarly, we can apply these methods to the dual case.
\begin{proposition}
	Under the metric $g_{k,-1}$, we suppose $w$ solves the following problem,
	\begin{align}
		\begin{cases}
		-\Delta w-2k w=f, & \text{ in }\Sigma, \\
		\frac{\partial w}{\partial \tau}+w=g, & 
		\text{ on }\partial \Sigma \end{cases}
	\end{align}
	for some $f \in C^{\infty}(\Sigma), g \in C^{\infty}(\partial\Sigma)$.

	Suppose $\|f\|_\infty ,\|g\|_\infty \le C_1\delta^2$ for some fixed constant $C_1$ and $\left|\delta\right|<1$, then we have
	\[
		\|w\|_{\infty}\le C \delta^2
	\]
	where $C$ does not rely on $\delta$ for some constant $C$.
	Moreover, we have
\[
		\|w\|_{H^{1}(\Sigma,g_{k,-1})}\le C\left( \|f\|_{L_2(\Sigma,g_{k,-1})}
		+\|g\|_{L_2(\partial\Sigma,g_{k,-1})}\right) 
		+C\delta^2.
	\]

	Using these results, we can show that, if $v_{k}$ is defined in Proposition \ref{prop:existence_uniqueness}, then $k\rightarrow v_{k}$ is a $C^{1}$ map from $(-\infty,0]$ to $H^{1}(\Sigma,g)$.
\end{proposition}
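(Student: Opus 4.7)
The proof is a near verbatim replay of Propositions~\ref{prop_H1_estimate} and \ref{prop_C1_uc}, with one key observation: in this dual setting the roles of interior and boundary are swapped. The Robin coefficient $+1$ on $\partial \Sigma$ is always positive, whereas the interior coefficient $-2k$ degenerates as $k\uparrow 0$. Consequently the auxiliary comparison function must be arranged to rescue the interior rather than the boundary, and one must flip the sign of its Neumann data relative to Proposition~\ref{prop_H1_estimate}.

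For the $L^{\infty}$-bound I would introduce (under $g_{k,-1}$) an auxiliary function $h$ solving $\Delta h=-H_{0}$ in $\Sigma$, $\partial h/\partial \tau=-1$ on $\partial \Sigma$, with $H_{0}=L(k,-1)/A(k,-1)>0$ forced by the divergence theorem, and set $\bar w=lh+A\pm w$ with $\bar H=\sup_{\Sigma}|h|$. A direct computation gives
\begin{align*}
-\Delta \bar w-2k\bar w={}& lH_{0}-2klh-2kA\pm f\quad \text{in }\Sigma,\\
\frac{\partial \bar w}{\partial \tau}+\bar w={}& -l+lh+A\pm g\quad \text{on }\partial \Sigma.
\end{align*}
The single choice $l=\|f\|_{\infty}/H_{0}$ and $A=l(1+\bar H)+\|g\|_{\infty}$ makes the interior source $\ge 2|k|(l+\|g\|_{\infty})\ge 0$ and the boundary source $\ge 0$, uniformly for every $k\le 0$; the weak maximum principle rules out an interior negative minimum and Hopf's lemma rules out a boundary one, so $\bar w\ge 0$. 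Taking both signs yields $|w|\le l\bar H+A\le C\delta^{2}$. Then testing the PDE against $w$ and inserting $\partial w/\partial \tau=g-w$ gives
\begin{align*}
\int_{\Sigma}|\nabla w|^{2}\,dV-2k\int_{\Sigma}w^{2}\,dV+\int_{\partial \Sigma}w^{2}\,ds=\int_{\Sigma}fw\,dV+\int_{\partial \Sigma}gw\,ds,
\end{align*}
and Cauchy--Schwarz on the right together with the just-obtained $L^{\infty}$-bound (used to absorb residual $w^{2}$-terms by $C\delta^{4}$) produces the claimed $H^{1}$-estimate.

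For the $C^{1}$-map assertion I would follow Proposition~\ref{prop_C1_uc}. First solve the linearised problem $-\Delta \tilde w-2k\tilde w=1$ in $\Sigma$, $\partial \tilde w/\partial \tau+\tilde w=0$ on $\partial \Sigma$ by minimising the coercive integral $\frac{1}{2}\int_{\Sigma}(|\nabla w|^{2}-2kw^{2})\,dV+\frac{1}{2}\int_{\partial \Sigma}w^{2}\,ds-\int_{\Sigma}w\,dV$; coercivity persists at $k=0$ via the trace-type Poincaré inequality $\|w\|_{L^{2}}^{2}\le C(\|\nabla w\|_{L^{2}}^{2}+\|w\|_{L^{2}(\partial \Sigma)}^{2})$. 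A Taylor expansion of the equations for $v_{k+\delta}$ and $v_{k}$ (written under $g_{k,-1}$) then shows that $\bar w:=v_{k+\delta}-v_{k}-\delta \tilde w$ satisfies exactly an equation of the proposition's form, with sources
\begin{align*}
f=k(e^{2\bar v}-1-2\bar v)+\delta(e^{2\bar v}-1),\qquad g=-(e^{\bar v}-1-\bar v),
\end{align*}
where $\bar v=v_{k+\delta}-v_{k}$. The bound $\|\bar v\|_{\infty}\le C|\delta|$ from Proposition~\ref{prop:continous} together with the standard estimates $|e^{x}-1|\le C|x|$, $|e^{x}-1-x|\le Cx^{2}$ give $\|f\|_{\infty},\|g\|_{\infty}\le C\delta^{2}$, and the energy estimate then yields $\|\bar w\|_{H^{1}}\le C\delta^{2}$, which is precisely the $C^{1}$-differentiability of $k\mapsto v_{k}$ with derivative $\tilde w$; continuity of $k\mapsto \tilde w_{k}$ in $H^{1}$ follows by another application of the energy estimate to the difference of linearised solutions at nearby $k$.

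The main technical point is the specific choice of sign $\partial h/\partial \tau=-1$: this sign-flip relative to Proposition~\ref{prop_H1_estimate} is precisely what allows the cancellation in the interior source $lH_{0}-2klh-2kA=lH_{0}+2|k|(A-lh)\ge 2|k|(l+\|g\|_{\infty})$ so that a single choice of $l,A$ handles all $k\in(-\infty,0]$ without a dichotomy between ``$k$ near $0$'' and ``$|k|$ large''.
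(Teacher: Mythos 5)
Your proposal is correct and follows exactly the route the paper intends for this statement (which it records without proof, as the mirror image of Propositions~\ref{prop_H1_estimate} and~\ref{prop_C1_uc}): an auxiliary function $h$ with flipped Neumann data, the maximum principle plus Hopf's lemma for the $L^{\infty}$ bound, testing against $w$ for the $H^{1}$ bound, and the linearised problem $-\Delta \tilde w-2k\tilde w=1$, $\partial \tilde w/\partial \tau+\tilde w=0$ with the correctly computed sources $f=k(e^{2\bar v}-1-2\bar v)+\delta(e^{2\bar v}-1)$, $g=-(e^{\bar v}-1-\bar v)$ for the $C^{1}$ statement. Your observation that the boundary Robin coefficient $+1$ is uniformly positive here, so a single choice of $l$ and $A$ covers all $k\le 0$ without the two-case split used in the proof of Proposition~\ref{prop_H1_estimate}, is accurate and is the one point where the dual argument is genuinely simpler rather than merely transposed.
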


Now we can finish the proof of our main theorems. 
\begin{proof}
	[Proof of Theorem \ref{thm:fix_K} and Theorem \ref{thm:fix_c}]
	The existence and uniqueness of metric $g_{-1,c} (g_{k,-1})$ is equivalent to the existence and uniqueness of $u_{c}
	(v_{k})$, which has been proved in Proposition \ref{prop:existence_uniqueness}.
	Those metrics can be directly chosen by
\[
		g_{-1,c}=e^{2u_{c}}g,\quad g_{k,-1}=e^{2v_{k}}g.
	\]

	Note that
\[
		L(-1,c)=\int_{ \partial \Sigma} 
		e^{u_{c}}ds,\quad A(-1,c)=\int_{ \Sigma} e^{2u_{c}}
		dV. \]

	So they are all continuous functions defined on $(-\infty,1)$ by the continuity of $u_{c}$.
	We know both $L(-1,c),A(-1,c)$ is strictly increasing when $c \in (-\infty,0]$ by the monotonicity properties of $u_{c}$. The Gauss-Bonnet formula tells us
	\begin{align}
		\hat{L}(-1,c)-A(-1,c)=2\pi \chi(\Sigma)
		\label{eq:pf_gauss_bonnet}
	\end{align}
	
	So $\hat{L}(-1,c)$ is strictly increasing when $c \in (-\infty,0]$. 
	For the case $c \in [0,1)$, we need use the result in \cite{rupflin2021hyperbolic}. 
	Indeed, one of the main results of Rupflin's paper implies $c\rightarrow c L(-1,c)$ is a $C^{1}$ strictly monotonicity increasing functions that have a limit
	\[
		\lim_{c\rightarrow 1^-} c
		L(-1,c)=+\infty.
	\]
	See Lemma 3.1 in \cite{rupflin2021hyperbolic} for details. Besides, the monotonicity of $L(-1,c)$ is still a corollary of Rupflin's result. Note that under metric $g_{-1,c}$, we have
	\[
		\frac{d}{dc}L(-1,c)= \int_{ \partial \Sigma} wds 
	\]
	where $w$ is still the function defined by problem (\ref{eq:pf_linearised_problem}). 
	Following the steps in the proof of Lemma 3.1 in \cite{rupflin2021hyperbolic}, we can apply the trace estimate to give
	\begin{align*}
		\int_{ \Sigma} 
		\left|\nabla w\right|^2+2w^2 dV={} & 
		\int_{ \partial \Sigma} cw^2+wds\\
		\le{} & (1-\varepsilon ) \int_{ \Sigma} 
		\left|\nabla w\right|^2+2w^2dV+ \int_{ \partial \Sigma} wds \end{align*}

	This will imply $\int_{ \partial \Sigma} wds>0$.
	Hence $L(-1,c)$ should be strictly increasing in $[0,1)$ and $L(-1,1)=+\infty$ since $\hat{L}(-1,1)=+\infty$.
	Again by the Gauss-Bonnet formula (\ref{eq:pf_gauss_bonnet}), we know $A(-1,c)$ is strictly increasing when $c \in [0,1)$ and $A(-1,1)=+\infty$.

	This will finish the proof of Theorem \ref{thm:fix_K}.

	For the rest of the proof, we note the continuity and monotonicity of $L(k,-1)$, $A(k,-1)$ is a direct consequence of monotonicity of $v_{k}$, which has been proved in Proposition \ref{prop:continous}.
	So the monotonicity of $\hat{A}(k,-1)$ coming from the Gauss-Bonnet formula
\[
		\hat{A}(k,-1)-L(k,-1)= k A(k,-1)-L(k,-1)=2\pi \chi(\Sigma).
	\]
	
	Take $k=0$ in the above Gauss-Bonnet formula, we have
\[
		\hat{A}(0,0)=0\quad \text{ and }
		\quad L(0,-1)=-2\pi\chi(\Sigma).
	\]

	So we finish the proof of theorem \ref{thm:fix_c}.
\end{proof}
\begin{remark}
	We could show the function $L(-1,c), A(-1,c), L(k,-1),A(k,-1)$ are all at least $C^1$ for $c \in (-\infty,1), k \in (-\infty,0]$ since we know $c\rightarrow u_c$ and $k\rightarrow v_k$ are at least $C^{1}$. Since we do not use this result here, we will leave this result to the readers.
\end{remark}

\begin{proof}
	[Proof of Theorem \ref{thm:uniform_surface}]
	Existence and uniqueness of $g_{k,c}$ is a combination of Theorem \ref{thm:fix_K} and Theorem \ref{thm:fix_c} using the scaling properties
\[
		\lambda^2g_{\lambda^2k,\lambda c}= g_{k,c} \text{ for any }\lambda>0
	\]

	For the function $L, A, \hat{L}$ and $A$ defined on $\mathcal{D}$, we have
\begin{align}
\arraycolsep=1.4pt
		\begin{array}{rllrrll}
			L\left( \frac{k}{\lambda^2}, \frac{c}{\lambda^2}\right) &=& \lambda L(k,c)& \quad & 
			\hat{L}\left( \frac{k}{\lambda^2}, \frac{c}{\lambda^2}\right) &=& \hat{L}(k,c)\\
			A\left( \frac{k}{\lambda^2}, \frac{c}{\lambda^2}\right) &=& \lambda^2 A(k,c)& \quad & 
			\hat{A}\left( \frac{k}{\lambda^2}, \frac{c}{\lambda^2}\right) &=& \hat{A}(k,c)
		\end{array}
		\label{eq:pf_scal}
	\end{align}
	for any $\lambda>0$. So all of these functions are continuous on $\mathcal{D}$. 

	The monotonicity of these four functions are direct consequences of Proposition \ref{prop:continous} and equations (\ref{eq:pf_scal}). 
	The limited behavior of these four functions is still easy to see.
	For completeness, let's analyze them one by one.

	\noindent\textbf{Monotonicity of $L$.}

	First, let's show $L$ is strictly increasing with respect to $c$. 

	For $k<0$, by equations (\ref{eq:pf_scal}), we have
	\[
		L(k,c)=\frac{1}{\sqrt{-k}}
		L\left(-1,\frac{c}{\sqrt{-k}}\right)
	\]
	which is strictly increasing when $c$ increases.

	For $k=0$, we have $L(0, c)=\frac{2\pi \chi(\Sigma)}{c}$, which is strictly increasing when $c<0$ by the Gauss-Bonnet formula.

	Second, let's show $L(k,c)$ is strictly increasing when $c$ is fixed. We consider the following three cases.
	\begin{itemize}
		\item If $c<0$, we have $L(k,c)=-\frac{1}{c}L\left(\frac{k}{c^2},-1\right)$. So for the fixed $c$, the monotonicity of $L(k,c)$ comes from the monotonicity of $L(k, -1)$, which is strictly increasing.
		\item If $c=0$, we have $L(k,0)=-\frac{1}{k}L(-1,0)$, which is strictly increasing for $k \in (-\infty,0)$.
		\item If $c>0$, we note $cL(k,c)=\frac{c}{\sqrt{-k}}L\left( -1,\frac{c}{\sqrt{-k}} \right) $. By the monotonicity of $\hat{L}(-1,c)$ when $c \in (0,1)$, we know $cL(k,c)$ is strictly increasing when $k \in (-\infty,-c^2)$. So $L(k,c)$ is strictly increasing with respect to $k$.
	\end{itemize}

	So we know $L$ will be strictly increasing along both positive directions of $c$ and $k$.
	
	\noindent\textbf{Limit behavior of $L$.}

	For $(k,c)\rightarrow (-\lambda^2,\lambda)$ with $\lambda>0$, we have
\[
		L(k,c)=\frac{1}{\sqrt{
			-k	
		}}L\left( -1, \frac{c}{\sqrt{-k}}\right)
\rightarrow +\infty \]

	For fixed $k\le 0$, we have
\[
		L(k,-\lambda)= \frac{1}{\lambda}
		L\left( \frac{k}{\lambda^2},
		-1\right) \rightarrow 0
		\text{ as }\lambda\rightarrow 
		+\infty.
	\]

	Similarly, for fixed $c \in \mathbb{R} $, we have
\[
		L(-\lambda^2,c)= \frac{1}{\lambda}
		L\left( -1,\frac{c}
		{\lambda}\right) \rightarrow 0 \text{ as }\lambda\rightarrow 
		+\infty.
	\]
	
	For the values of $L(0,c)$, we just need to apply the Gauss-Bonnet formula to see
\[
		c L(0,c)+0\times A(0,c)= 2\pi \chi(\Sigma)
	\]

	\noindent\textbf{Monotonicity of $\hat{L}$.}

	First, when fixing $k$ with $k<0$, by (\ref{eq:pf_scal}) we have
	\[
		\hat{L}(k,c)=\hat{L}\left(-1,\frac{c}{\sqrt{-k}}\right)
	\]
	which is strictly increasing for $c \in (-\infty, \sqrt{-k})$ by Theorem \ref{thm:fix_K}. Note that when $k=0$, we have
	\[
		\hat{L}(0,c)=cL(0,c)=2\pi \chi(\Sigma),
	\]
	which is a constant function with respect to $c \in (-\infty,0)$.

	Second, when fixing $c$, we have following three cases.
	\begin{itemize}
		\item If $c<0$, we have $\hat{L}(k,c)=c L(k,c)$, which is strictly decreasing by the monotonicity property of $L$. 
		\item If $c=0$, we have $\hat{L}(k,0)=0$ by definition of $\hat{L}$.
		\item If $c>0$, we have $\hat{L}(k,c)=c L(k,c)$, which is strictly increasing by the monotonicity property of $L$.
	\end{itemize}

	\noindent\textbf{Limit behavior of $\hat{L}$.}

	\begin{itemize}
		\item $\hat{L}(-\lambda^2,\lambda)=\hat{L}(-1,1)=+\infty$ by Theorem \ref{thm:fix_K}. 
		\item $\hat{L}(k,-\lambda)=\hat{L}(\frac{k}{\lambda^2},-1)=-L(\frac{k}{\lambda^2},-1)\rightarrow 2\pi\chi(\Sigma)$ as $\lambda\rightarrow +\infty$ by the limit behavior of $L$.
		\item $\hat{L}(0,c)=2\pi\chi(\Sigma)$ is followed by the Gauss-Bonnet formula.
		\item $\hat{L}(-\lambda^2,c)=cL(-\lambda^2,c)\rightarrow 0$ as $\lambda\rightarrow +\infty$ by the limit behavior of $L$.

	\end{itemize}

	\noindent\textbf{Monotonicity of $A$.}

	First, let's fix $k\le 0$. If $k<0$, then $kA(k,c)=2\pi \chi(\Sigma)-cL(k,c)$ by the Gauss-Bonnet formula, which is strictly decreasing by the monotonicity properties of $\hat{L}$. So $A(k,c)$ itself is strictly increasing. If $k=0$, then $A(0,c)=\frac{1}{c^2}A(0,-1)$ by equations (\ref{eq:pf_scal}), which is strictly increasing when $c<0$.

	When fixing $c \in \mathbb{R} $, we have following three cases.
	\begin{itemize}
		\item If $c<0$, we have $A(k,c)=\frac{1}{c^2}A(\frac{k}{c^2},-1)$, which is strictly increasing by Theorem \ref{thm:fix_c}.
		\item If $c=0$, we have $A(k,0)=\frac{2\pi\chi(\Sigma)}{k}$ by the Gauss-Bonnet formula, which is strictly increasing since $2\pi\chi(\Sigma)<0$.
		\item If $c>0$, we have $A(k,c)=\frac{1}{-k}A(-1,\frac{c}{\sqrt{-k}})$. Note that the function $\lambda^2A(-1,\lambda)$ is a strictly increasing function when $\lambda>0$ since $A(-1,\lambda)$ is strictly increasing. So $c^2A(k,c)$ is a strictly increasing function when $k \in (-\infty,-c^2)$. So $A(k,c)$ is strictly increasing when fixing $c>0$.
	\end{itemize}

	\noindent\textbf{Limit behavior of $A$.}
	\begin{itemize}
		\item By the Gauss-Bonnet formula, for $\lambda>0$, we have $-\lambda^2A(-\lambda^2,\lambda)=2\pi \chi(\Sigma)-\lambda L(-\lambda^2,\lambda)=-\infty$. So $A(-\lambda^2,\lambda)=+\infty$.
		\item $A(k,-\lambda)=\frac{1}{\lambda^2}A(\frac{k}{\lambda^2},-1)\rightarrow 0$ as $\lambda\rightarrow +\infty$.
		\item $A(0,c)=\frac{A(0,-1)}{c^2}$ by equations (\ref{eq:pf_scal}) for $c<0$.
		\item $A(-\lambda^2,c)=\frac{1}{\lambda^2}A(-1,\frac{c}{\lambda})\rightarrow 0$ as $\lambda\rightarrow +\infty$.
	\end{itemize}

	For the properties $\hat{A}$, we use the Gauss-Bonnet formula to get $\hat{A}(k,c)+\hat{L}(k,c)=2\pi\chi(\Sigma)$. So all of the properties of $\hat{A}$ come from the properties of $\hat{L}$ directly.
\end{proof}

\begin{remark}
	The above theorems and their proofs are still valid if $\partial \Sigma$ has several components. 
	But we will expect more results than that, inspired by the work of Rupflin \cite{rupflin2021hyperbolic}.

	More precisely, we suppose $\Sigma$ has boundary $\partial \Sigma= \cup _{i=1}^n \Gamma_i$ with $n\ge 2$. We have the following questions.
	\begin{itemize}
		\item For what kind of $c=(c_1,\cdots ,c_n)\in \mathbb{R}^n $ and $k\le 0$ such that there is a metric $g$ with interior Gaussian curvature $k$ and geodesic boundary curvature $c_i$ on the boundary $\Gamma_i$. 
			In general, even for hyperbolic metric, i.e. $k=-1$, $c_i$ might exceed 1 if some of other $c_j$ small enough. This is quite difference with the theorem in \cite{rupflin2021hyperbolic}.
		\item How about the uniqueness of metric if it does exist for a given $c \in \mathbb{R}^n $ and $k\le 0$? In general, uniqueness might not hold even in hyperbolic metrics if some of $c_i$ exceed 1.
			So this question indeed asks how many solutions it will have and how to find these solutions. 
		\item The last question we are concerned about is how the boundary length and area are related to the choice of $c \in \mathbb{R}^n $ and $k\le 0$.
	\end{itemize}

	These questions will be related to the weak uniformization of the triple junction surface that $\Gamma$ has more than one component.
	
	Moreover, we can ask the same questions for the case $k>0$. Even for the case $\partial \Sigma$ having only one component, we do not know the existence and uniqueness results. 
	Some examples show uniqueness will fail when $k>0$.
	\label{rmk:multiple_boundary}
\end{remark}

\section{Weak uniformization of triple junction surfaces}%
\label{sec:weak_uniformization_of_triple_junction_surface}

In this section, we will prove the weak uniformization for the triple junction surface.
In particular, we will prove the following theorem.

\begin{theorem}
	[Weak uniformization for $\chi(M)\le 0$]
	Suppose $M=(\Sigma_1,\Sigma_2,\Sigma_3;\Gamma)$ is a triple junction surface with metric $g=(g_1,g_2,g_3)$ on it and $\chi(M)\le 0$ such that $\Gamma$ has only one component. Then we can find a hyperbolic metric $\overline{g}=(\overline{g}_1, \overline{g}_2, \overline{g}_3)$ and new diffeomorphisms $\overline{\varphi}_i: \Gamma\rightarrow \partial \Sigma_i$ such that the weak uniformization holds for $k=-1$.
	
	Moreover, $\overline{g}_i$ is unique if $\chi(\Sigma_i)\le 0$.
	\label{thm_weak_uniformization_for_chi_m_le_0_}
\end{theorem}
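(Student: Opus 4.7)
The plan is to uniformize each $\Sigma_i$ independently and then reduce the compatibility requirement $\sum \kappa_i = 0$ to a one-dimensional intermediate-value problem parameterized by a common boundary length $L$. First, because $\Gamma$ has a single component, formula (\ref{eq:Euler_formula}) makes every $\chi(\Sigma_i)$ odd; hence $\chi(M)$ is odd, and the hypothesis $\chi(M) \le 0$ in fact forces $\chi(M) \le -1$. By the case analysis in the introduction, at most one of $\Sigma_1, \Sigma_2, \Sigma_3$ can be a disk.

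For each $\Sigma_i$ with $\chi(\Sigma_i) < 0$, Theorem \ref{thm:uniform_surface} produces, for each $c \in (-\infty,1)$, a unique conformal hyperbolic metric with constant geodesic boundary curvature $c$, and $L(-1,\cdot)$ is a continuous strictly increasing bijection $(-\infty,1) \to (0,+\infty)$; let $c_i : (0,+\infty) \to (-\infty,1)$ be its inverse. For the (at most one) disk component $\Sigma_{i_0}$, its conformal class is unique, so any conformal hyperbolic metric on it is a geodesic ball in $\mathbb{H}^2$; the explicit relations $L = 2\pi \sinh r$, $c = \coth r$ give a continuous strictly decreasing $c_{i_0} : (0,+\infty) \to (1,+\infty)$. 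I then set $F(L) := c_1(L) + c_2(L) + c_3(L)$, which is continuous on $(0,+\infty)$.

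As $L \to +\infty$, each $c_i(L) \to 1$ (from below for non-disk pieces, from above for the disk), so $F(L) \to 3$. As $L \to 0^+$, the decisive input is the asymptotic $\hat{L}(-1,-\infty) = 2\pi \chi(\Sigma_i)$ from Theorem \ref{thm:uniform_surface}, which gives $L \cdot c_i(L) \to 2\pi \chi(\Sigma_i)$ for the non-disk pieces; the explicit disk formulas give $L \cdot c_{i_0}(L) \to 2\pi = 2\pi \chi(\mathrm{disk})$. Summing,
\[
L \cdot F(L) \;\longrightarrow\; 2\pi \chi(M) \le -2\pi,
\]
so $F(L) \to -\infty$. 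By the intermediate value theorem there exists $L^* \in (0,+\infty)$ with $F(L^*) = 0$. Take $\overline{g}_i$ to be the conformal hyperbolic metric on $\Sigma_i$ of boundary length $L^*$ and boundary geodesic curvature $c_i(L^*)$, and pick orientation-preserving arc-length-preserving diffeomorphisms $\overline{\varphi}_i : \Gamma \to \partial \Sigma_i$. The identity $F(L^*)=0$ is exactly the compatibility condition, completing existence.

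When every $\chi(\Sigma_i) \le 0$, each $c_i$ is strictly increasing in $L$, so $F$ is strictly increasing and $L^*$ is unique; the uniqueness clause of Theorem \ref{thm:uniform_surface} then gives uniqueness of each $\overline{g}_i$. The main obstacle, as anticipated in the introduction, is the $L \to 0^+$ asymptotic for $F$: without the sharp limit $\hat{L}(-1,-\infty) = 2\pi \chi(\Sigma)$, one could at best show $L \cdot c_i(L)$ stays bounded, which would not suffice to push $F$ below zero. It is precisely this asymptotic, obtained via the dual problem with $c = -1$ fixed and $k$ varying (Theorem \ref{thm:fix_c}), that makes the intermediate value argument go through.
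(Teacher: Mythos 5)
Your existence argument is essentially the paper's: uniformize each $\Sigma_i$ separately, parametrize by the common boundary length, and locate a zero of the total curvature sum using the endpoint asymptotics, with the limit $\hat{L}(-1,-\infty)=2\pi\chi(\Sigma_i)$ from Theorem \ref{thm:fix_c} as the decisive input at $L\to 0^+$. That part is correct. Two issues remain.

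First, a minor factual error: it is not true that at most one $\Sigma_i$ can be a disk. Since each $\chi(\Sigma_i)=1-2\,\mathrm{genus}(\Sigma_i)$ is odd, one can take two disks together with a genus-two surface with one boundary component, giving $\chi(M)=1+1-3=-1\le 0$. The introduction's case analysis only classifies the configurations with $\chi(M)>0$; it does not bound the number of disks when $\chi(M)\le 0$. This does not damage your existence argument (the asymptotics still sum to $2\pi\chi(M)<0$), but it matters for the next point.

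Second, and more substantively, your uniqueness argument has a gap. You prove uniqueness of $L^*$ only when every $\chi(\Sigma_i)\le 0$, by noting that then each $c_i$ is increasing so $F=\sum c_i$ is strictly increasing. But the theorem's uniqueness clause is per-index: $\overline{g}_i$ should be unique for each $i$ with $\chi(\Sigma_i)\le 0$, including in configurations where some other $\Sigma_j$ is a disk. In that mixed case your $F$ is a sum of increasing functions and the strictly decreasing function $c_{\mathbb{D}}$, so you have not ruled out multiple zeros $L^*$, and hence have not pinned down the boundary length, which is the first step of any uniqueness proof. The paper's fix is to work not with $F(l)$ but with $\hat{c}(l):=l\,F(l)=\sum_i l\,c_i(l)$: the point is that each summand $l\,c_i(l)$ is strictly increasing \emph{even for the disk}, since in your parametrization $l\,c_{\mathbb{D}}(l)=2\pi\sinh r\cdot\coth r=2\pi\cosh r=\sqrt{4\pi^2+l^2}$. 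Hence $\hat{c}$ is strictly increasing from $2\pi\chi(M)<0$ to $+\infty$, its zero $l_0$ is unique in all cases, and the per-index uniqueness of $\overline{g}_i$ for the non-disk pieces then follows from Theorem \ref{thm:uniform_surface}. You already have all the ingredients; you only need to multiply by $l$ before invoking monotonicity.
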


Recall that for triple junction surface $M$ with $\Gamma$ having only one component, we know $ \chi(M)= 3-\sum_{i =1}^{3}\text{genus}(\Sigma_i) $, which can only be an odd number. So $\chi(M)\le 0$ is equivalent to say $\chi(M)<0$. Hence we can require $k=-1$ in the weak uniformization.

The proof of this theorem is a direct application of Theorem \ref{thm:uniform_surface}. 
\begin{proof}
	[Proof of Theorem \ref{thm_weak_uniformization_for_chi_m_le_0_}]
	For any $\Sigma_i$, if $\chi(\Sigma_i)\le 0$ (indeed $\chi(\Sigma)<0$), we can apply Theorem \ref{thm:uniform_surface} to get $L_i(-1,c)$ is a strictly increasing function from $(-\infty,1)$ to $(0,+\infty)$ where $L_i$ is the length of $\Gamma$ after uniformization.
	So the inverse function of $L_i(-1,c)$ exist. That means, we can find a continuous function $c_i(l): (0,+\infty)\rightarrow (-\infty,1)$ which is strictly increasing such that $L_i(-1,c_i(l))=l$. The geometric meaning of this function $c_i(l)$ is, given any $l \in (0,+\infty)$, we can find a unique hyperbolic metric $\overline{g}_i$ conformal to $g_i$ such that the boundary length of $\Gamma$ under metric $\overline{g}_i$ is $l$. 

	Now we define the function $\hat{c}_i(l)= lc_i(l)$ for $l \in (0,+\infty)$. It is continuous and strictly increasing by the properties of function $\hat{L}$ in Theorem \ref{thm:uniform_surface}. Moreover, we have the following limit behavior of $\hat{c}_i$ as
\[
		\lim_{l\rightarrow 0} \hat{c}_i(l)= 2\pi \chi(\Sigma),\quad \lim_{l\rightarrow +\infty} \hat{c}_i(l)=
		+\infty.
	\]

	If $\chi(\Sigma_i)>0$, then $\Sigma_i$ is diffeomorphic to a disk. Since any metric on disk will conformal to each other, we can consider the metric
\[
		\frac{4\rho \left|dz\right|^2}{\left( 1-\rho \left|z\right|^2\right) ^2}
		\text{ on unit disk }\mathbb{D}
	\]
	with $0<\rho<1$.
	This is a hyperbolic metric on $\mathbb{D}$ with boundary length $L_{\mathbb{D}}(\rho)= \frac{4\pi \sqrt{\rho}}{1-\rho}$ and geodesic boundary curvature $\kappa_{\mathbb{D}}(\rho)=\frac{1+\rho}{2\sqrt{\rho}}$.
	Hence, we can see that the function
\[
		c _{\mathbb{D}}(l):= \kappa_{\mathbb{D}}
		\circ L^{-1}_{\mathbb{D}}(l),\quad l \in (0,+\infty)
	\]
	is a strictly decreasing continuous function onto the set $(1,+\infty)$ after a simple calculation. The geometric meaning of $c _{\mathbb{D}}(l)$ here is, given $l \in (0,+\infty)$, we can always find a hyperbolic metric $\overline{g}$ on $\mathbb{D}$ such that $\overline{g}$ has constant geodesic curvature $c _{\mathbb{D}}(l)$ on $\partial \mathbb{D}$ with boundary length equals to $l$. Moreover, such of $\overline{g}$ is unique upto a M\"obius transformation on disk by the conformal structure on disk. So $c _{\mathbb{D}}(l)$ is completely determined by $l$.

	We can still define $\hat{c}_{\mathbb{D}}(l)$ by $\hat{c}_{\mathbb{D}}=lc _{\mathbb{D}}(l)$.
	If we write it out, we will find
\[
		\hat{c}_{\mathbb{D}}(l)= \left( \frac{2\pi(1+\rho)}{1-\rho} \right) 
		\circ L^{-1}_{\mathbb{D}}(l).
	\]

	So $\hat{c}_{\mathbb{D}}$ is a strictly increasing continuous function which maps $(0,+\infty)$ onto the set $(2\pi,+\infty)$.
	Note that $2\pi\chi(\mathbb{D})=2\pi$ here.

	So in summary, for any $\Sigma_i$, we can define the continuous function $\hat{c}_i(l)$ by choose $\hat{c}_i(l)=\hat{c}_{\mathbb{D}}(l)$ if $\Sigma_i$ is diffeomorphic to a disk.
	
	Hence the function
	\begin{align*}
		\hat{c}(l):(0,+\infty)
		\rightarrow {} & (2\pi\chi(M),+\infty) \\
		l\rightarrow {} & \sum_{i =1}^{3}\hat{c}_i(l)
	\end{align*}
	is a strictly increasing function. So there is a unique $l_0$ such that $\hat{c}(l_0)=0$.

	With such $l_0$, we know
\[
		l_0\sum_{i =1}^{3}c_i(l_0)=0
	\]
	
	According to the geometric meaning of $c_i$, we know we can find hyperbolic metrics $\overline{g}=(\overline{g}_1, \overline{g}_2,\overline{g}_3)$ such that $\overline{g}_i$ has boundary length $l_0$ and geodesic curvature of boundary $c_i(l_0)$ such that $\sum_{i =1}^{3}c_i(l_0)$. Since all boundaries have the same length, we can construct new diffeomorphisms $\overline{\varphi}_i: \Gamma\rightarrow \partial \Sigma_i$ to make sure they are all isometric by imposing a suitable metric on $\Gamma$.
	Clearly, this metric $\overline{g}$ meets all the conditions in the definition of weak uniformization of triple junction surfaces.

	After establishing the existence, the uniqueness is easy to see. Note that if we have another metric $\hat{g}= (\hat{g}_1,\hat{g}_2,\hat{g}_3)$ such that it is a weakly uniformization of $M$, then the first thing we note is the length of $\Gamma$ should be $l_0$.
	This is because the geodesic curvature of boundary $\kappa_i$ is still given by $c_i(l)$ by the uniqueness of uniformization of surface with boundary (Upto a M\"obius transformation for $\chi(\Sigma_i)>0$, but it will keep the geodesic curvature and the length of the boundary).
	But we know the only length $l$ to make $\hat{c}(l)=0$ is $l_0$. The same length on the boundary on the other hand will give the uniqueness of hyperbolic metric on $\Sigma_i$ if $\chi(\Sigma_i)\le 0$.
\end{proof}

\begin{remark}
	If we only consider the weak uniformization of triple junction surface $M$ with only one component of triple junction such that all of $\chi(\Sigma_i)\le 0$ (Indeed $\chi(\Sigma_i)<0$), then there is a very quick way to do it.

	For each $c \in (-\infty, 1)$, we can consider the energy
\[
		I_i(u,c)=\int_{ \Sigma_i}
		\frac{1}{2}\left|\nabla u\right|^2+ \frac{1}{2}e^{2u}-K_{g_i}u dV+ \int_{ \partial \Sigma_i} \kappa _{g_i}u-e^{u}ds \]

	Now we can consider the min-max point of the following.
	\begin{align}
		\max _{\sum_{i =1}^{3}c_i=0, c_i<1
		}\sum_{i =1}^{3}
		\inf _{u_i \in H^{1}(\Sigma_i,g_i)}I_i(u_i,c_i).
		\label{eq:min_max}
	\end{align}

	Using the results in \cite{rupflin2021hyperbolic}, we can show all of the function
\[
		c_i \rightarrow \inf_{u_i \in H^{1}(\Sigma _i,g)}I_i(u_i,c_i)
	\]
	are continuous with limit $+\infty$ when $c_i\rightarrow 1$.

	We can let $u_c$ be the minimizer of $I_i(u,c)$ for $u \in H^{1}(\Sigma,g)$. So $c\rightarrow u_c$ is a $C^{1}$ map to $H^{1}(\Sigma_i,u)$.
	So the quantity (\ref{eq:min_max}) will take its maximum value in the interior of set $\{ (c_1,c_2,c_3)\in \mathbb{R}^3 , c_1+c_2+c_3=0, c_i<1 \text{ for all }i\}$.
	Such of $(c_1,c_2,c_3)$ will exactly give same boundary length for metric determined by minimizer of $I_i(u_i,c_i)$.

	But if one of $\chi(\Sigma_i)>0$, then the min-max argument cannot be applied anymore.
	So we need to establish more precise relations of boundary length and geodesic curvature of boundary after uniformization, especially when geodesic curvature of boundary approaches to $-\infty$. This is also the reason why we spend so much space talking about the uniformization of surfaces with boundary.
\end{remark}

\subsection*{Acknowledgements}
	I would like to thank my advisor Prof. Martin Li for his helpful discussions and encouragement.

The author is partially supported by a research grant from the Research Grants Council of the Hong Kong Special Administrative Region, China [Project No.: CUHK 14304120] and CUHK Direct Grant [Project Code: 4053401]. 

\bibliographystyle{plain}
\bibliography{references}

\end{document}